\setlist{leftmargin=1.6em}
\newtheorem{theorem}{Theorem}
\newtheorem{proposition}{Proposition}
\newtheorem{assumption}{Assumption}
\newtheorem{lemma}{Lemma} 
\newtheorem{corollary}{Corollary}
\theoremstyle{definition}
\theoremstyle{remark}
\newtheorem{remark}{Remark}
\def\subsubsection{\@startsection{subsubsection}{3}%
  \z@{.5\linespacing\@plus.7\linespacing}{-.5em}%
  {\normalfont\bfseries}}
\newcommand{\vz}[0]{\bm {z}}
\newcommand{\sign}[0]{\mathrm{sign}}
\newcommand{\calC}[0]{\mathcal{C}}
\newcommand{\calH}[0]{\mathcal{H}}
\newcommand{\calX}[0]{\mathcal{X}}
\newcommand{\calY}[0]{\mathcal{Y}}
\newcommand{\E}[0]{\mathbb{E}}
\newcommand{\R}[0]{\mathbb{R}}
\newcommand{\vertiii}[1]{{\left\vert\kern-0.25ex\left\vert\kern-0.25ex\left\vert #1 
    \right\vert\kern-0.25ex\right\vert\kern-0.25ex\right\vert}}
\newcommand{\vasti}{\bBigg@{3.5 }}
\newcommand{\vast}{\bBigg@{4}}
\newcommand{\Vast}{\bBigg@{5}}
\newcommand{\Vastt}{\bBigg@{7}}
\newcommand{\be}{\begin{equation}}
\newcommand{\ee}{\end{equation}}
\newcommand{\ba}{\begin{align}}
\newcommand{\ea}{\end{align}}
\newcommand{\baa}{\begin{align*}}
\newcommand{\eaa}{\end{align*}}
\newcommand{\rs}[1] {{\color{magenta}RS: [#1]}}
\newcommand{\Cipl}{C_{i,\eta}^+}
\newcommand{\Cimi}{C_{i,\eta}^-}
\newcommand{\Dij}{D_{ij}^{-\gamma}}
\newcommand{\Ai}{A_{i,\eta,\gamma}}
\newcommand{\Aij}{A_{i,j,\eta,\gamma}}
\newcommand{\Bi}{B_{i,\eta,\gamma}}
\newcommand{\Deps}{\Delta^\varepsilon}
\DeclareMathOperator{\diam}{diam}
\DeclareMathOperator{\dist}{dist}
\title{Approximation rates of entropic maps in semidiscrete optimal transport}
\thanks{Work done while Ritwik Sadhu was a graduate student at Cornell University, Department of Statistics and Data Science.
Z. Goldfeld is partially supported by NSF grants  CCF-2046018, and DMS-2210368, and the IBM Academic Award.
K. Kato is partially supported by NSF grants DMS-2210368 and DMS-2413405.}
\author[R. Sadhu]{Ritwik Sadhu}
\address[R. Sadhu]{Department of Statistics, University of Washington.}
\email{rsadhu@uw.edu}
\author[Z. Goldfeld]{Ziv Goldfeld}
\address[Z. Goldfeld]{School of Electrical and Computer Engineering, Cornell University.}
\email{goldfeld@cornell.edu}
\author[K. Kato]{Kengo Kato}
\address[K. Kato]{Department of Statistics and Data Science, Cornell University.}
\email{kk976@cornell.edu}
\begin{document}

\begin{abstract}
Entropic optimal transport offers a computationally tractable approximation to the classical problem. 
In this note, we study the approximation rate of the entropic optimal transport map (in approaching the Brenier map) when the regularization parameter $\varepsilon$ tends to zero in the semidiscrete setting, where the input measure is absolutely continuous while the output is finitely discrete. Previous work shows that the approximation rate is $O(\sqrt{\varepsilon})$ under the $L^2$-norm with respect to the input measure. 
In this work, we establish faster, $O(\varepsilon^2)$ rates up to polylogarithmic factors, under the dual Lipschitz norm, which is weaker than the $L^2$-norm. For the said dual norm, the $O(\varepsilon^2)$ rate is sharp. As a corollary, we derive a central limit theorem for the entropic estimator for the Brenier map in the dual Lipschitz space when the regularization parameter tends to zero as the sample size increases. 
\end{abstract}

\keywords{Brenier map, entropic map, entropic optimal transport, semidiscrete optimal transport}
\maketitle

\section{Introduction}

\subsection{Overview}
For an absolutely continuous input distribution $P$ and a generic output distribution $Q$, both on $\R^d$ with finite second moments, the \emph{Brenier map} \cite{brenier1991polar} sending $P$ to $Q$ induces the optimal coupling for the optimal transport problem with quadratic cost:
\begin{equation}
\inf_{\pi \in \Pi(P,Q)} \int \| x-y \|^2 \, d\pi(x,y), \label{eq: OT}
\end{equation}
where $\Pi(P,Q)$ denotes the collection of couplings of $P$ and $Q$.
The Brenier map can be characterized as a $P$-a.e. unique transport map given by the gradient of a convex function. This celebrated result has seen numerous applications in statistics and machine learning, ranging from transfer learning and domain adaptation to vector quantile regression and causal inference; see  \cite{chewi2024statistical} as an excellent review of the recent development in statistical optimal transport. From a mathematical standpoint, the Brenier map provides a powerful tool to derive functional inequalities \cite{cordero2002some} and suggests natural extensions of the quantile function to the multivariate setting \cite{chernozhukov2017monge}, among others. 

In practice, however, directly solving the optimal transport problem \eqref{eq: OT} and computing the Brenier map is challenging,  especially when $d$ is large. A popular remedy for this computational difficulty is entropic regularization, whereby \eqref{eq: OT} is replaced with 
\begin{equation}
\inf_{\pi \in \Pi(P,Q)} \int \| x-y \|^2 \, d\pi(x,y) + \varepsilon D_{\mathsf{KL}}(\pi \| P \otimes Q),
\label{eq: EOT1}
\end{equation}
where $\varepsilon > 0$ is the regularization parameter and $D_{\mathsf{KL}}$ is the Kullback-Leibler divergence defined by $D_{\mathsf{KL}}(\alpha \| \beta) \coloneqq \int \log \frac{d\alpha}{d\beta} \, d\alpha$ if $\alpha \ll \beta$ and $\coloneqq \infty$ otherwise. Entropic optimal transport is amenable to efficient computation via Sinkhorn's algorithm, for which rigorous convergence guarantees have been developed under different settings \cite{franklin1989scaling,cuturi2013lightspeed,altschuler2017near,peyre2019computational,berman2020sinkhorn,carlier2022linear,eckstein2022quantitative,ghosal2022convergence,nutz2023stability,conforti2023quantitative,chizat2024sharper}.  As $\varepsilon$ shrinks, various objects from entropic optimal transport converge to those for unregularized optimal transport---a topic that has seen extensive research activities in recent years; see the literature review below. 

Denoting by $\pi^\varepsilon$  the (unique) optimal coupling for the entropic problem \eqref{eq: EOT}, an entropic surrogate of the Brenier map is given by $T^\varepsilon(x) = \E_{(X,Y) \sim \pi^\varepsilon}[Y \mid X=x]$, which we shall call the \textit{entropic map}\cite{pooladian2021entropic}. To understand the quality of this computationally tractable approximation, the rate at which the entropic map approaches the Brenier map as $\varepsilon \downarrow 0$ has received recent attention. 
\cite{carlier2023convergence} showed that if $P$ and $Q$ are compactly supported and the Brenier map $T^0$ is $M$-Lipschitz (which precludes $Q$ being discrete), then $\| T^\varepsilon - T^0 \|_{L^2(P)}^2 \le M(d\varepsilon \log (1/\varepsilon) + O(\varepsilon))$.
In the continuous-to-continuous setting, imposing stronger smoothness conditions on the densities of $P$ and $Q$ and the dual potentials, \cite{pooladian2021entropic} established faster $O(\varepsilon^2)$ rates for $\| T^\varepsilon - T^0 \|_{L^2(P)}^2$. 
In the semidiscrete setting (i.e., when $P$ is absolutely continuous and $Q$ is finitely discrete), \cite{pooladian2023minimax} showed that 
\begin{equation}
\| T^\varepsilon - T^0 \|_{L^2(P)}^2 = O(\varepsilon),
\label{eq: L2 bound}
\end{equation}
and their Example 3.5 demonstrates that this rate is sharp under $L^2(P)$. The follow-up work by the same authors \cite{divol2024tight} derived quantitative upper bounds on the $L^2(P)$ error.

The goal of this paper is to explore quantitative upper bounds on the bias of $T^\varepsilon$ for small $\varepsilon$ in the semidiscrete setting, but from a different angle.
Instead of the $L^2$-norm, we shall look at the linear functional $\langle \varphi,T^\varepsilon \rangle_{L^2(P)}$ for a suitable Borel vector field $\varphi$ and derive quantitative upper bounds on $\langle \varphi,T^\varepsilon - T^0 \rangle_{L^2(P)}$.  The preceding bound \eqref{eq: L2 bound} by \cite{pooladian2023minimax} implies that, for any bounded Borel vector field $\varphi$,
\begin{equation}
| \langle \varphi, T^\varepsilon-T^0 \rangle_{L^2(P)}| \le \| \varphi \|_\infty  \| T^\varepsilon - T^0 \|_{L^2(P)} = O(\sqrt{\varepsilon}). 
\label{eq: slow rate}
\end{equation}
Somewhat surprisingly, this rate can be \textit{much} faster for smooth test functions. Indeed, our main result shows that, if $P$ is supported on a compact convex set and has a positive Lipschitz density on the support, then for any $\alpha$-H\"{o}lder vector field $\varphi$ with $\alpha \in (0,1]$,
\[
| \langle \varphi, T^\varepsilon-T^0 \rangle_{L^2(P)}| = O(\varepsilon^{1+\alpha} \vee \varepsilon^2 \log^3(1/\varepsilon)).
\]
In particular, this implies near $O(\varepsilon^2)$ approximation rates for Lipschitz test functions. The hidden constant depends on $\varphi$ only through its $\alpha$-H\"{o}lder norm, so by taking the supremum over $\varphi$ whose $\alpha$-H\"{o}lder norm is at most $1$, the same rate holds for $\| T^\varepsilon - T^0 \|_{(\calC^\alpha)^*}$, where $\| \cdot \|_{(\calC^\alpha)^*}$ is the dual norm. % dual to the $\alpha$-H\"{o}lder norm. 
This fast convergence rate under the dual norm is in line with the (sharp) approximation rate of $\varepsilon^2$ for the semidiscrete optimal transportation cost itself \cite{altschuler2022asymptotics}. Finally, building on our recent work \cite{sadhu2023limit}, we derive a central limit theorem in the dual space $(\calC^\alpha)^*$  for the empirical entropic map with vanishing regularization parameters.

\subsection{Literature review}
There is now a large literature on convergence and approximation rates of entropic optimal transport costs, potentials, couplings, and maps when the regularization parameter tends to zero \cite{mikami2004monge,mikami2008optimal,leonard2012schrodinger,carlier2017convergence,chizat2020faster,pooladian2021entropic,conforti2021formula,altschuler2022asymptotics,nutz2022entropic,bernton2022entropic,delalande2022nearly,carlier2023convergence,pooladian2023minimax,pal2024difference,divol2024tight}. Among others,  \cite{altschuler2022asymptotics} derived an asymptotic expansion of the entropic cost in the semidiscrete case when the regularization parameter tends to zero, showing faster convergence at the rate $\varepsilon^2$ than the continuous-to-continuous case. Key to their derivation is the fact that the entropic dual potential vector ($z^\varepsilon$ below) converges toward the unregularized one with a rate faster than $\varepsilon$. The follow-up work by \cite{delalande2022nearly} establishes faster $O(\varepsilon^{1+\alpha'})$ rates for any $0 < \alpha' < \alpha$ for the entropic dual potential when the input density is $\alpha$-H\"{o}lder continuous with $\alpha \in (0,1]$.

There is also a growing interest in estimation and inference for the Brenier map \cite{chernozhukov2017monge,hutter2021minimax,ghosal2019multivariate,pooladian2021entropic,ghosal2022convergence,pooladian2022debiaser,divol2022optimal,sadhu2023limit,pooladian2023minimax,manole2021plugin}. Among them, \cite{pooladian2021entropic} proposed using the entropic map with vanishing regularization parameters to estimate the Brenier map, and established convergence rates under the $L^2(P)$-norm in the continuous-to-continuous setting. However, these rate are suboptimal from a minimax point of view \cite{hutter2021minimax}.
For the semidiscrete setting, \cite{pooladian2023minimax} established the $O(n^{-1/2})$ rate for the entropic estimator with vanishing regularization levels $\varepsilon = \varepsilon_n = O(n^{-1/2})$ under the squared $L^2(P)$-norm. Our recent work \cite{sadhu2023limit} derived various limiting distribution results for certain functionals of the empirical (unregularized) Brenier map, when the input $P$ is known but the discrete output $Q$ is unknown. 
Finally, while estimation of the Brenier map in the continuous-to-continuous setting suffers from the curse of dimensionality \cite{hutter2021minimax}, estimation of the entropic map with \textit{fixed} $\varepsilon$ enjoys parametric sample complexity, and several limiting distribution results have been derived; see \cite{del2023improved,rigollet2022sample,goldfeld2024limit}.

\subsection{Organization}
The rest of the note is organized as follows. 
Section \ref{sec: background} contains background material on the optimal transport problem  and its entropic counterpart. Section \ref{sec: main result} presents our main results. All the proofs are gathered in Section \ref{sec: proof}.

\subsection{Notation}
For $a,b \in \R$, we use the notation $a \vee b = \max \{a,b \}$ and $a \wedge b = \min \{ a,b \}$.
We use $\| \cdot \|$ and $\langle \cdot, \cdot \rangle$ to denote the Euclidean norm and inner product, respectively. Let $\mathbbm{1}_N \in \R^N$ denote the vector of ones. 
%For a subset $A$ of an Euclidean space,  define $\dist (x,A) \coloneqq \inf \{ \| x-y \| : y \in A \}$.
For $d \in \mathbb{N}$ and $0 \le r \le d$, $\calH^{r}$ denotes the $r$-dimensional Hausdorff measure on $\R^d$; cf. \cite{evans1991measure}.
\iffalse
\[
\calH^{r}(A) = \lim_{\delta \to 0} \inf \Big \{ \frac{\pi^{r/2}}{\Gamma(\frac{r}{2}+1)} \sum_{j=1}^\infty \Big ( \frac{\diam C_j}{2}\Big)^r :  A \subset \bigcup_{j=1}^\infty C_j, \ \diam C_j \le \delta \Big\}, \quad A \subset \R^d,
\]
where $\diam C_j$ is the diameter of $C_j$ and $\Gamma (\cdot)$ is the gamma function.

See for a textbook treatment of Hausdorff measures.
\fi
\section{Background}
\label{sec: background}

\subsection{Optimal transport}
Let $P$ and $Q$ be Borel probability measures on $\R^d$ with finite second moments, and write $\calX$ and $\calY$ for their respective supports. Recall the quadratic optimal transport problem \eqref{eq: OT}, which, upon expanding the square, is equivalent~to % Expanding the square, one sees that the problem \eqref{eq: OT} is equivalent to 
\begin{equation}
\sup_{\pi \in \Pi(P,Q)}\int \langle x,y \rangle \, d\pi(x,y).  \label{eq: OT2}
\end{equation}
The Brenier theorem \cite{brenier1991polar} yields that whenever $P$ is absolutely continuous, the problem \eqref{eq: OT2} admits a unique optimal solution $\pi^0$, which is induced by a $P$-a.e. unique map $T^0: \calX \to \R^d$, in the sense that $\pi^0 = P \circ (\mathrm{id},T^0)^{-1}$ with $\mathrm{id}$ denoting the identity map.
%\zg{why not use pushforward notation $\pi^0=(\mathrm{id},T^0)_\sharp P$? If so, need to define in the notation section}. \kk{Pushforward is used only here so I don't think we need to define the dedicated notation for it} 
We call $T^0$ the \textit{Brenier map}. 

The Brenier map can be characterized by the gradient of a convex potential solving the dual problem, which reads as
\[
\inf_{\substack{(\phi, \psi) \in L^1(P) \times L^1(Q) \\ \phi(x) + \psi (y) \ge \langle x,y \rangle, \forall (x,y) \in \calX \times \calY}} \int \phi \, dP + \int \psi \, dQ.
\]
One may replace $\phi$ with the convex conjugate of $\psi$, $\psi^* \coloneqq\sup_{y \in \calY} (\langle \cdot,y \rangle -\psi(y))$, which always satisfies the constraint and leads to the semidual problem
\[
\inf_{\psi \in L^1(Q)} \int \psi^* \, dP + \int \psi \, dQ.
\]
For any optimal solution $\psi$ to the semidual problem, the Brenier map is given by $T^0 (x)= \nabla \psi^*(x)$ for $P$-a.e. $x$. See, e.g., \cite{villani2008optimal,santambrogio15} for background of optimal transport.

\medskip 

We focus herein on the semidiscrete setting, where $P$ is absolutely continuous while $Q$ is finitely discrete with support $\calY = \{ y_1,\dots,y_N \}$. Let $q = (q_1,\dots,q_N)^\intercal$ be the vector of masses with $q_i = Q(\{ y_i \})$ for $i \in [N] \coloneqq \{ 1,\dots,N\}$. In this case, setting $z = (z_1,\dots,z_N)^\intercal$ with $z_i = \psi (y_i)$, the semidual problem reduces to 
\begin{equation}
\inf_{z \in \R^N} \int \max_{1 \le i \le N}(\langle x,y_i \rangle - z_i) \, dP(x) + \langle z,q \rangle. \label{eq: semidual}
\end{equation}
Given any $z^0 = (z_1^0,\dots,z_N^0)^\intercal$ optimal solution to \eqref{eq: semidual}, the Brenier map is given by 
%\zg{comment on uniqueness} \kk{Uniqueness is addressed at the end of this subsection}
\[
T^0 (x) = \nabla_{x} \Big (\max_{1 \le i \le N}(\langle x,y_i \rangle - z_i^0) \Big),\quad \mbox{$P$-a.e. $x$.}
\]
%for $P$-a.e. $x$. 
To simplify its description, for $z \in \R^N$, define the \textit{Laguerre cells} $\{ C_i(z) \}_{i=1}^N$,
%\zg{should we include figure? this is standard stuff for folks that worked on semidiscrete OT, but many in OT community (who would be interested in the paper) might not. Can consider including the regular diagram, if doesn't interfere with space considerations.}
\[
\begin{split}
C_i(z)
%&\coloneqq \bigcap_{\substack{j \ne i \\ 1 \le j \le N}} \big \{ z \in \calX : \langle x,y_i \rangle -z_i \ge  \langle x,y_j \rangle -z_j \big \} \\
&\coloneqq \bigcap_{\substack{j \ne i; 1 \le j \le N}} \big \{ z \in \calX : \langle y_i-y_j,x \rangle \ge z_i -z_j \big \},
\end{split}
\]
using which the Brenier map is given by
\begin{equation}
T^0 (x) = y_i \quad \text{for} \ x \in  C_i(z^0) \ \text{and} \ i \in [N]. \label{eq: brenier}
\end{equation}
The Laguerre cells form a partition of $\calX$ up to Lebesgue negligible sets, so the description in \eqref{eq: brenier} specifies a $P$-a.e. defined map with values in $\calY$. Furthermore, as $T^0$ is a transport map, we have $P(C_i(z^0)) = Q(\{ y_i \}) = q_i > 0$
for $i \in [N]$. 

The dual vector $z^0$ is not unique as adding the same constant to all $z_i$ does not change the value of the objective in \eqref{eq: semidual}. So, we always normalize $z^0$ in such a way that $\langle z^0,\mathbbm{1}_N \rangle = 0$. 
Together with mild conditions on $P$, this normalization guarantees uniqueness of $z^0$.

\subsection{Entropic optimal transport} 
The entropic optimal transport problem corresponding to \eqref{eq: OT2} is
\begin{equation}
\sup_{\pi \in \Pi(P,Q)} \int \langle x,y \rangle \, d \pi(x,y) - \varepsilon D_{\mathsf{KL}}(\pi \| P \otimes Q),
\label{eq: EOT}
\end{equation}
where $\varepsilon > 0$ is the regularization parameter.
\iffalse
and $D_{\mathsf{KL}}$ is the Kullback-Leibler divergence (or the relative entropy) defined by
\[
D_{\mathsf{KL}}(\alpha \| \beta) \coloneqq
\begin{cases}
    \int \log \frac{d\alpha}{d \beta} \, d\alpha & \text{if $\alpha \ll \beta$}, \\
    \infty & \text{otherwise}.
\end{cases}
\]
\fi
For any $P$ and $Q$ with finite second moments (i.e., beyond the semidiscrete setting), the problem \eqref{eq: EOT} admits a unique optimal solution $\pi^\varepsilon$, which is of the form 
\[
\frac{d\pi^\varepsilon}{d(P \otimes Q)} (x,y)= e^{\frac{\langle x,y \rangle - \phi^\varepsilon(x)-\psi^\varepsilon(y)}{\varepsilon}},
\]
where $(\phi^\varepsilon,\psi^\varepsilon)$ is any optimal solution to the dual problem\footnote{Pairs of optimal potentials are a.e. unique up to additive constants, i.e., if $(\tilde{\phi},\tilde{\psi})$ is another optimal pair then $\tilde{\phi} = \phi +c $ $P$-a.e. and $\tilde{\psi} = \psi  -c $ $Q$-a.e., for some $c\in\R$.}
\[
\inf_{(\phi,\psi) \in  L^1(P) \times L^1(Q)} \int \phi \, dP + \int \psi \, dQ + \varepsilon \iint e^{\frac{\langle x,y \rangle - \phi(x)-\psi(y)}{\varepsilon}} \, dP(x) dQ(y).
\]
Here, since $\pi^\varepsilon$ is a coupling, one has 
$
\int e^{\frac{\langle x,y \rangle - \phi^\varepsilon(x)-\psi^\varepsilon(y)}{\varepsilon}} dQ(y) = 1$,
that is,
\[
\phi^\varepsilon(x) = \varepsilon \log \int  e^{(\langle x,y \rangle - \psi^\varepsilon(y))/\varepsilon}\,  dQ(y),\quad \mbox{$P$-a.e. $x$.}
\]
Substituting this expression leads to the semidual problem
\[
\inf_{\psi \in L^1(Q)} \int \left \{ \varepsilon \log \int  e^{(\langle \cdot,y \rangle - \psi(y))/\varepsilon} \, dQ(y) \right \} \, dP + \int \psi \, dQ.
\]
See \cite{nutz2021introduction} for a comprehensive overview of entropic optimal transport. An entropic~counterpart of the Brenier map was proposed in \cite{pooladian2021entropic} by observing that $T^0(x) = \E_{(X,Y) \sim \pi^0}[Y \mid X=x]$, i.e., the Brenier map agrees with the conditional expectation of the second coordinate given the first under $\pi^0$. Replacing $\pi^0$ with $\pi^\varepsilon$ leads to the \textit{entropic map}
\[
T^\varepsilon (x) = \E_{(X,Y) \sim \pi^\varepsilon}[Y \mid X=x], \ x \in \calX. 
\]

Specializing to the semidiscrete setting where $Q$ has support $\calY = \{ y_1,\dots,y_N \}$, one may reduce the semidiscrete problem to 
\[
\inf_{z \in \R^N}\int \left \{ \varepsilon \log \sum_{i=1}^N q_i e^{(\langle \cdot,y_i \rangle - z_i)/\varepsilon}  \right \} \, dP + \langle z,q \rangle.
\]
Replacing $z_i$ with $z_i+\varepsilon \log q_i$, the above semidual problem is equivalent to 
\begin{equation}
\inf_{z \in \R^N}\int \left \{ \varepsilon \log \sum_{i=1}^N  e^{(\langle \cdot,y_i \rangle - z_i)/\varepsilon}  \right \} \, dP + \langle z,q \rangle. \label{eq: semidual EOT}
\end{equation}
By general theory of entropic optimal transport, the latter semidual problem \eqref{eq: semidual EOT} admits a unique optimal solution $z^\varepsilon$ subject to the normalization $\langle z^\varepsilon, \mathbbm{1}_N \rangle = 0$. Furthermore, the optimal coupling $\pi^\varepsilon$ is of the form
\[
\frac{d\pi^\varepsilon}{d(P \otimes R)} (x,y_i)= e^{\frac{\langle x,y_i \rangle - \phi^\varepsilon(x) - z_i^\varepsilon}{\varepsilon}}, \ x \in \calX , i \in [N],
\]
where $R$ is the counting measure on $\calY$ and
$\phi^\varepsilon(x) = \varepsilon \log \sum_{i=1}^N  e^{(\langle x,y_i \rangle - z_i^\varepsilon)/\varepsilon}$ for $x \in \calX$.
In this case, the entropic map further simplifies to 
\[
T^\varepsilon (x) = \sum_{i=1}^N y_i \frac{e^{(\langle x,y_i \rangle - z_i^\varepsilon)/\varepsilon}}{\sum_{j=1}^N e^{(\langle x,y_j \rangle - z_j^\varepsilon)/\varepsilon}}, \quad x \in \calX.
\]

\section{Main results}
\label{sec: main result}
We derive approximation rates of the entropic map $T^\varepsilon$ towards the Brenier map $T^0$ as $\varepsilon \downarrow 0$. In contrast to \cite{pooladian2023minimax,divol2024tight} that focus on the (squared) $L^2(P)$-norm $\| T^\varepsilon - T^0 \|_{L^2(P)}^2$, we consider the linear functional
\[
\langle \varphi, T^\varepsilon-T^0 \rangle_{L^2(P)} = \int \langle \varphi(x), T^\varepsilon (x)-T^0(x) \rangle \, dP(x),\]
for a suitable Borel vector field $\varphi: \calX \to \R^d$, and establish the rates. Taking the supremum over a certain function class leads to the convergence rates under the corresponding dual norm. We start from the assumption under which the results hold. 

\begin{assumption}[Conditions on marginals]
\label{asp: compact_convex_regular_density}
(i)  The input measure $P$ is supported on a compact convex set $\calX \subset \R^d$ with nonempty interior and has a Lebesgue density $\rho$ that is Lipschitz continuous and strictly positive on $\calX$.
(ii)   The output measure $Q$ is finitely discrete with support $\calY = \{ y_1,\dots,y_N \} \subset \R^d$. For $q = (q_1,\dots,q_N)^\intercal$ with $q_i = Q(\{ y_i \})$, we assume that $\min_{1 \le i \le N} q_i \ge c_0$ for some (sufficiently small) constant $c_0 \in (0,1)$. 
\end{assumption}

Condition (i) guarantees uniqueness of the dual vector $z^0$ (subject to the normalization $\langle z^0,\mathbbm{1}_N \rangle =0$); cf. Theorem 7.18 in \cite{santambrogio15}.
For a vector-valued mapping $\varphi: \calX \to \R^d$ and $\alpha \in (0,1]$, the $\alpha$-H\"{o}lder norm $\| \varphi \|_{\calC^\alpha}$ (Lipschitz norm when $\alpha=1$) is defined by
\[
\| \varphi \|_{\calC^\alpha} \coloneqq \| \varphi \|_{\infty} + \sup_{x,y \in\calX; x \ne y} \frac{\| \varphi(x) - \varphi(y)\|}{\|x-y\|^\alpha},
\]
where $\| \varphi \|_{\infty} = \sup_{x \in \calX}\| \varphi (x)\|$. The following is our main result. 

\begin{theorem}[Convergence rates for H\"{o}lder test functions]
\label{thm: main result}
Fix $\alpha \in (0,1]$.
Under Assumption~\ref{asp: compact_convex_regular_density}, for every $\alpha$-H\"{o}lder vector field $\varphi: \calX \to \R^d$, 
    \[
    |\langle \varphi, T^\varepsilon - T^0 \rangle_{L^2(P)}| \lesssim \| \varphi \|_{\infty}\varepsilon^2 \log^3(1/\varepsilon)   + \| \varphi \|_{\calC^\alpha}\varepsilon^{1+\alpha}, \quad \forall \varepsilon \in (0,1),  
    \]
    where the inequality $\lesssim$ holds up to a constant that depends only on $\alpha,\calX, \rho, \calY$, and $c_0$.
\end{theorem}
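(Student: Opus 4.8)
\medskip
\noindent\emph{Proof strategy.}
The plan is to reduce the vector-valued estimate to a family of scalar ones and then exploit the almost-odd structure of the softmax transition across the facets of the Laguerre diagram. Write $w_i^\varepsilon(x) = e^{(\langle x,y_i\rangle - z_i^\varepsilon)/\varepsilon}\big/\sum_{j} e^{(\langle x,y_j\rangle - z_j^\varepsilon)/\varepsilon}$, so that $T^\varepsilon - T^0 = \sum_{i=1}^N y_i\,(w_i^\varepsilon - \mathbbm{1}_{C_i(z^0)})$, and set $\psi_i(x) := \langle \varphi(x),y_i\rangle\,\rho(x)$; by Assumption~\ref{asp: compact_convex_regular_density}(i) and boundedness of $\calY$ one has $\|\psi_i\|_\infty\lesssim\|\varphi\|_\infty$ and $\|\psi_i\|_{\calC^\alpha}\lesssim\|\varphi\|_{\calC^\alpha}$. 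Then $\langle \varphi, T^\varepsilon - T^0\rangle_{L^2(P)} = \sum_{i=1}^N \int_\calX \psi_i\,(w_i^\varepsilon - \mathbbm{1}_{C_i(z^0)})\,dx$. Interposing the unregularized Laguerre cell $C_i(z^\varepsilon)$ of the entropic potential, I split each summand as $\int \psi_i(w_i^\varepsilon - \mathbbm{1}_{C_i(z^\varepsilon)})\,dx + \int \psi_i(\mathbbm{1}_{C_i(z^\varepsilon)} - \mathbbm{1}_{C_i(z^0)})\,dx =: \mathrm{I}_i + \mathrm{II}_i$, separating a ``smoothing error'' from a ``potential perturbation''.

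For $\mathrm{II}_i$: since the internal facets of $C_i(\cdot)$ are pieces of hyperplanes with normals $y_i-y_j$ and slopes $\|y_i-y_j\|$ bounded below, perturbing $z^0\mapsto z^\varepsilon$ displaces each facet by $O(\|z^\varepsilon - z^0\|)$, and $\partial C_i(z^0)$ has bounded $\calH^{d-1}$-measure, so $|\mathrm{II}_i| \le \|\psi_i\|_\infty\,\mathrm{Leb}\big(C_i(z^\varepsilon)\triangle C_i(z^0)\big)\lesssim \|\varphi\|_\infty\|z^\varepsilon - z^0\|$. It then remains to control $\|z^\varepsilon - z^0\|$. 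From the first-order conditions for \eqref{eq: semidual EOT} and \eqref{eq: semidual} one has $\int w_i^\varepsilon\,dP = q_i = P(C_i(z^0))$ for all $i$, hence $q_i - P(C_i(z^\varepsilon)) = \int (w_i^\varepsilon - \mathbbm{1}_{C_i(z^\varepsilon)})\,dP$; running the facet/edge analysis below with $\rho$ in place of $\psi_i$ (so that oddness of the transition kernel together with Lipschitzness of $\rho$ yields an $O(\varepsilon^2)$ facet term) gives $\max_i|q_i - P(C_i(z^\varepsilon))| = O(\varepsilon^2)$ up to polylogarithmic factors, and inverting the Jacobian of $z\mapsto(P(C_i(z)))_i$ on $\mathbbm{1}_N^\perp$---invertible near $z^0$ because Assumption~\ref{asp: compact_convex_regular_density} (convexity of $\calX$, positivity of $\rho$) forces all active facets to be nondegenerate---yields $\|z^\varepsilon - z^0\|\lesssim\varepsilon^2\log^3(1/\varepsilon)$, so $\sum_i|\mathrm{II}_i|\lesssim\|\varphi\|_\infty\varepsilon^2\log^3(1/\varepsilon)$. (The available bound $O(\varepsilon^{1+\alpha'})$, $\alpha'<1$, of \cite{delalande2022nearly} is too weak for this, which is why the sharper estimate is needed.)

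The term $\mathrm{I}_i$ is the crux. Fix $\eta\asymp\varepsilon\log(1/\varepsilon)$, chosen so $e^{-c\eta/\varepsilon}\lesssim\varepsilon^2$ with $c=\min_{i\ne j}\|y_i-y_j\|$. Where the top two of $\{\langle x,y_k\rangle - z_k^\varepsilon\}_k$ differ by more than $\eta$ one has $|w_i^\varepsilon - \mathbbm{1}_{C_i(z^\varepsilon)}|\lesssim e^{-c\eta/\varepsilon}$, contributing $\lesssim\|\varphi\|_\infty\varepsilon^2$. The complement is an $\eta$-neighbourhood of $\bigcup_{j\ne i}(\partial C_i(z^\varepsilon)\cap\partial C_j(z^\varepsilon))$, split into (a) the part within $\eta$ of a single internal facet $F_{ij}^\varepsilon$ and away from the codimension-$2$ strata where $\ge 3$ cells meet (or where a facet meets $\partial\calX$), and (b) an $\eta$-neighbourhood of those strata. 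On (b), which has Lebesgue measure $\lesssim\eta^2$, the crude bound $|w_i^\varepsilon-\mathbbm{1}_{C_i(z^\varepsilon)}|\le 1$ gives a contribution $\lesssim\|\varphi\|_\infty\varepsilon^2\log^2(1/\varepsilon)$. On (a) all weights but $w_i^\varepsilon,w_j^\varepsilon$ are $O(\varepsilon^2)$, so $w_i^\varepsilon - \mathbbm{1}_{C_i(z^\varepsilon)} = h(r/\varepsilon) + O(\varepsilon^2)$, where $r(x) = \langle x,y_i-y_j\rangle - (z_i^\varepsilon - z_j^\varepsilon)$ is the affine signed distance to $F_{ij}^\varepsilon$ and $h(u) = (1+e^{-u})^{-1} - \mathbbm{1}\{u>0\}$ is odd, bounded, and exponentially decaying. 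Passing to coordinates $(t,r)$ tangent/normal to $F_{ij}^\varepsilon$ (constant Jacobian $\|y_i-y_j\|^{-1}$), extending the $r$-integral to $\RR$ at exponentially small cost, substituting $u=r/\varepsilon$, and using $\int_\RR h = 0$,
\[
\int \psi_i(x)\,h\!\left(\tfrac{r(x)}{\varepsilon}\right) dx = \frac{\varepsilon}{\|y_i-y_j\|}\int_{F_{ij}^\varepsilon}\!\!\int_{\RR}\!\big(\psi_i(x(t,\varepsilon u))-\psi_i(x(t,0))\big)\,h(u)\,du\,dt + O\!\big(\|\varphi\|_\infty\,\varepsilon^2\log^2(1/\varepsilon)\big),
\]
and the first term is $O(\|\varphi\|_{\calC^\alpha}\varepsilon^{1+\alpha})$ by $|\psi_i(x(t,\varepsilon u))-\psi_i(x(t,0))|\lesssim\|\varphi\|_{\calC^\alpha}(\varepsilon|u|)^\alpha$ and $\int_\RR|u|^\alpha|h(u)|\,du<\infty$. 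Summing over the $\le\binom{N}{2}$ facets and over $i$, then combining with the bound on $\sum_i|\mathrm{II}_i|$, collecting the polylogarithmic factors, gives $|\langle \varphi, T^\varepsilon - T^0\rangle_{L^2(P)}|\lesssim\|\varphi\|_\infty\varepsilon^2\log^3(1/\varepsilon) + \|\varphi\|_{\calC^\alpha}\varepsilon^{1+\alpha}$.

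I expect two main obstacles. The first is the sharp control of $\|z^\varepsilon - z^0\|$ for $\mathrm{II}_i$: one must carry out the second-order facet/edge expansion of $q_i - P(C_i(z^\varepsilon))$ and quantify the local strong convexity of the semidual \eqref{eq: semidual}, invoking Assumption~\ref{asp: compact_convex_regular_density} to exclude degenerate facets; this is where the polylogarithmic factor ultimately lives. The second is the geometric bookkeeping of the Laguerre diagram---the clean isolation of the single-facet regions and the handling of the codimension-$2$ strata (including where facets meet $\partial\calX$)---where the cancellation $\int_\RR h = 0$ is precisely what upgrades the naive $O(\varepsilon)$ boundary-layer bound to $O(\varepsilon^{1+\alpha})$ for $\alpha$-H\"older $\psi_i$, and to $O(\varepsilon^2)$ when $\psi_i$ is replaced by the Lipschitz density $\rho$.
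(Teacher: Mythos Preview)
Your proposal is correct and closely parallels the paper's argument, with a few organizational differences worth noting. The paper does not interpose $C_i(z^\varepsilon)$; instead it works throughout over the fixed cells $C_i(z^0)$ and replaces $z^\varepsilon$ by $z^0$ directly inside the softmax weights, incurring a multiplicative error $e^{2\|z^\varepsilon-z^0\|_\infty/\varepsilon}$ that is harmless once $\|z^\varepsilon-z^0\|=o(\varepsilon)$. The cancellation you extract from $\int_\RR h=0$ appears in the paper as the antisymmetry $h_{ij}^\varphi(0)=-h_{ji}^\varphi(0)$: after the coarea formula each cell integral runs only over $[0,\infty)$, and the pairing of $(i,j)$ with $(j,i)$ supplies the cancellation---the same mechanism in a different bookkeeping. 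Your tube/codimension-$2$ decomposition corresponds to the paper's Lemma~\ref{lem:slack_integral}(ii) (the $\varepsilon^2\log^2(1/\varepsilon)$ cross-term bound) together with Lemma~\ref{lem: hyperplane} (the $O(t)$ control on $\calH^{d-1}(H_{ij}(t)\triangle [H_{ij}(0)+tv_{ij}])$). For $\|z^\varepsilon-z^0\|$, the paper does not run the facet analysis on the first-order conditions and invert the Jacobian as you propose; it instead sharpens Delalande's differential argument, bounding $\|\dot z^\varepsilon\|\lesssim\varepsilon\log^3(1/\varepsilon)$ via the implicit function theorem and integrating. Your route is arguably more self-contained but needs an a priori bound placing $z^\varepsilon$ in the neighborhood of $z^0$ where the Jacobian of $z\mapsto(P(C_i(z)))_i$ is uniformly invertible on $\mathbbm{1}_N^\perp$; the paper's ODE route sidesteps that bootstrap.
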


\begin{remark}[Bounded test functions]
Inspection of the proof shows that if the test function $\varphi$ is only (measurable and) bounded, then
$
    |\langle \varphi, T^\varepsilon - T^0 \rangle_{L^2(P)}| \lesssim \| \varphi \|_{\infty}\varepsilon$ for $\varepsilon \in (0,1)$,
    where the hidden constant depends only on $\calX, \rho, \calY$, and $c_0$. 
    % {\color{blue}In particular, for $\varphi = \Teps - \Tstar$, we would have 
    % $
    % |\langle \varphi, T^\varepsilon - T^0 \rangle_{L^2(P)}| \lesssim \| \varphi \|_{\infty}\varepsilon$, where for non-trivial settings, $\|\varphi\|_\infty \gtrsim \min_{i \ne j} \|\vx_i - \vx_j\|$, which tallies with the known $O(\sqrt{\varepsilon})$ approximation rate in the $L^2(P)$ norm.}
    %\zg{not sure what is the message of this remark.} \kk{This remark covers the $\alpha=0$ case, but it does not require continuity of test functions.}
\end{remark}

Theorem \ref{thm: main result} implies that the (right) derivative of the mapping $\varepsilon \mapsto \langle \varphi, T^\varepsilon  \rangle_{L^2(P)}$ at $\varepsilon=0$ vanishes for any H\"{o}lder vector field $\varphi$. Indeed, the proof of the theorem shows that 
\[
\lim_{\varepsilon \downarrow 0} \frac{\langle \varphi, T^\varepsilon - T^0 \rangle_{L^2(P)}}{\varepsilon} = \sum_{i \ne j} \frac{\log 2}{\|y_i-y_j\|} \int_{C_i(z^0) \cap C_j(z^0)}\langle y_j-y_i,\varphi(x) \rangle \rho(x) \, d\calH^{d-1}(x),
\]
and the right-hand side vanishes.
Hence, we need to look at a higher-order expansion of the mapping $\varepsilon \mapsto \langle \varphi, T^\varepsilon  \rangle_{L^2(P)}$ around $\varepsilon=0$, which requires careful analysis of the facial structures of the Laguerre cells. In particular, special care is needed when $y_i-y_j$ and $y_i-y_k$ for some distinct indices $i,j,k$ are linearly dependent; see, e.g., the proof of Lemma~\ref{lem:slack_integral} ahead. The proof of Theorem \ref{thm: main result} is inspired by the proofs in \cite{altschuler2022asymptotics,delalande2022nearly} for the asymptotic expansions of the entropic cost, but differs from them in some important ways, as detailed in Remark \ref{rem: comparison} ahead. 

\begin{remark}[Sharpness of $O(\varepsilon^2)$ rate when $\alpha=1$]
Let $W_2^2(P,Q)$ denote the squared $2$-Wasserstein distance, i.e., the optimal value in \eqref{eq: OT}. Theorem 1.1 in \cite{altschuler2022asymptotics} establishes 
\[
\E_{(X,Y) \sim \pi^\varepsilon}[\|X-Y\|^2] = W_2^2(P,Q) + \frac{\varepsilon^2\pi^2}{12} \sum_{i < j} \frac{1}{\|y_i-y_j\|}\int_{C_i(z^0) \cap C_j(z^0)} \rho (x) \, d\calH^{d-1}(x) + o(\varepsilon^2).
\]
Rearranging terms, this implies that
\[
\langle \mathrm{id}, T^\varepsilon - T^0 \rangle_{L^2(P)} = -\frac{\varepsilon^2\pi^2}{24} \sum_{i < j} \frac{1}{\|y_i-y_j\|}\int_{C_i(z^0) \cap C_j(z^0)} \rho (x) \, d\calH^{d-1}(x) + o(\varepsilon^2).
\]
Since the identity mapping $\mathrm{id}$ is Lipschitz, the rate in Theorem \ref{thm: main result} is sharp up to the  $\log^3(1/\varepsilon)$ factor. The question of whether the polylogarithmic factor can be dropped for a generic Lipschitz vector is left for future research. 
\end{remark}

\begin{remark}[Sharpness of $O(\varepsilon^{\alpha+1})$ rate in $d=1$]
As in \cite{altschuler2022asymptotics,pooladian2023minimax}, consider $d=1, P=\mathrm{Unif} ([-1,1])$, and $Q = \frac{1}{2}(\delta_{-1}+\delta_{1})$, for which the entropic map is $T^\varepsilon (x) = \tanh (2x/\varepsilon)$ and the Brenier map is $T^0 (x) = \sign (x)$. For $\varphi(x) = \sign (x) |x|^{\alpha}$ with $\alpha \in (0,1]$, which is $\alpha$-H\"{o}lder on $[-1,1]$, one can verify from the dominated convergence theorem that 
\[
\lim_{\varepsilon \downarrow 0} \varepsilon^{-1-\alpha} \langle \varphi,T^{\varepsilon} - T^0 \rangle_{L^2(P)} = \int_0^\infty x^{\alpha} (\tanh (2x) - 1) \, dx,
\]
where the integral on the right-hand side is absolutely convergent. Hence, the $O(\varepsilon^{1+\alpha})$ rate in Theorem \ref{thm: main result} is in general sharp for $\alpha \in (0,1)$.
\end{remark}

Let $\calC^\alpha = \calC^\alpha(\calX;\R^d)$ be the Banach space of $\alpha$-H\"older mappings $\calX \to \R^d$ endowed with the norm $\| \cdot \|_{\calC^\alpha}$. The topological dual $(\calC^\alpha)^*$ is the Banach space of continuous linear functionals on $\calC^\alpha$ endowed with the dual norm, $\| \ell \|_{(\calC^\alpha)^*} = \sup_{\varphi: \| \varphi \|_{\calC^\alpha} \le 1} \ell (\varphi)$.  One may think of any bounded measurable mapping $T: \calX \to \R^d$ as an element of the dual space $(\calC^\alpha)^*$ by identifying $T$ with the linear functional $\varphi \mapsto \langle \varphi, T \rangle_{L^2(P)}$. 
With this identification, the preceding theorem yields rates of convergence of the entropic map under  $\| \cdot \|_{(\calC^\alpha)^*}$.
\begin{corollary}[Convergence rates under dual H\"{o}lder norm]
\label{cor: dual norm}
Fix $\alpha \in (0,1]$. 
Under Assumption~\ref{asp: compact_convex_regular_density}, 
    \[
    \| T^\varepsilon - T^0\|_{(\calC^\alpha)^*}\lesssim \varepsilon^{1+\alpha} \vee \varepsilon^2 \log^3(1/\varepsilon), \quad \forall \varepsilon \in (0,1),  
    \]
    where the inequality $\lesssim$ holds up to a constant that depends only on $\alpha, \calX, \rho, \calY$, and $c_0$.
\end{corollary}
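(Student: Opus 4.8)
The plan is to obtain Corollary~\ref{cor: dual norm} as an immediate consequence of Theorem~\ref{thm: main result} by passing to the supremum over the closed unit ball of $\calC^\alpha$. First I would check that $T^\varepsilon - T^0$ genuinely defines an element of $(\calC^\alpha)^*$. By \eqref{eq: brenier} the Brenier map takes values in $\calY = \{y_1,\dots,y_N\}$, while the entropic map $T^\varepsilon(x) = \sum_{i=1}^N y_i\, e^{(\langle x,y_i\rangle - z_i^\varepsilon)/\varepsilon}\big/\sum_{j=1}^N e^{(\langle x,y_j\rangle - z_j^\varepsilon)/\varepsilon}$ is a convex combination of the $y_i$'s; hence $\sup_{x \in \calX}\|T^\varepsilon(x) - T^0(x)\| \le 2\max_{1 \le i \le N}\|y_i\| < \infty$. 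Since $P$ is a probability measure, this gives $|\langle \varphi, T^\varepsilon - T^0\rangle_{L^2(P)}| \le \|\varphi\|_\infty \|T^\varepsilon - T^0\|_{L^2(P)} \le \|\varphi\|_{\calC^\alpha}\, \|T^\varepsilon - T^0\|_{L^2(P)}$, so $\varphi \mapsto \langle \varphi, T^\varepsilon - T^0\rangle_{L^2(P)}$ is a bounded linear functional on $\calC^\alpha$, i.e., an element of $(\calC^\alpha)^*$, consistent with the identification discussed just before the corollary.

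Next I would unwind the definition of the dual norm, $\|T^\varepsilon - T^0\|_{(\calC^\alpha)^*} = \sup\{ \langle \varphi, T^\varepsilon - T^0\rangle_{L^2(P)} : \|\varphi\|_{\calC^\alpha} \le 1 \}$, and apply Theorem~\ref{thm: main result} to each competitor $\varphi$ in the unit ball. The key point, which is already part of the statement of Theorem~\ref{thm: main result}, is that its hidden constant depends on $\varphi$ only through $\|\varphi\|_\infty$ and $\|\varphi\|_{\calC^\alpha}$; for $\|\varphi\|_{\calC^\alpha} \le 1$ one also has $\|\varphi\|_\infty \le 1$ (since $\|\varphi\|_{\calC^\alpha} \ge \|\varphi\|_\infty$ by definition), so the theorem yields $|\langle \varphi, T^\varepsilon - T^0\rangle_{L^2(P)}| \lesssim \varepsilon^2 \log^3(1/\varepsilon) + \varepsilon^{1+\alpha}$ uniformly over the unit ball, with a constant depending only on $\alpha, \calX, \rho, \calY, c_0$. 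Using the elementary bound $\varepsilon^2 \log^3(1/\varepsilon) + \varepsilon^{1+\alpha} \le 2\big(\varepsilon^{1+\alpha} \vee \varepsilon^2 \log^3(1/\varepsilon)\big)$ and taking the supremum over $\varphi$ then gives $\|T^\varepsilon - T^0\|_{(\calC^\alpha)^*} \lesssim \varepsilon^{1+\alpha} \vee \varepsilon^2 \log^3(1/\varepsilon)$ for all $\varepsilon \in (0,1)$.

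There is no substantive obstacle here: the entire mathematical content sits in Theorem~\ref{thm: main result}, and the corollary is a formal consequence of the fact that the bound there is linear (hence homogeneous of degree one) in the two seminorms $\|\varphi\|_\infty$ and $\|\varphi\|_{\calC^\alpha}$, so that the supremum over the unit ball reproduces the same rate. The only items deserving a word are the boundedness of $T^\varepsilon - T^0$, needed to legitimately view it inside $(\calC^\alpha)^*$, and the trivial inequality comparing a sum of two nonnegative terms with their maximum.
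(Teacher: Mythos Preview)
Your proposal is correct and matches the paper's approach: the corollary is not given a separate proof in the paper, which simply notes (just before stating it) that the hidden constant in Theorem~\ref{thm: main result} depends on $\varphi$ only through $\|\varphi\|_{\calC^\alpha}$, so taking the supremum over the unit ball yields the dual-norm bound. Your additional remarks on the boundedness of $T^\varepsilon - T^0$ and the sum-versus-max comparison are harmless elaborations of this one-line argument.
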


We discuss a statistical application of the preceding result. Suppose the input measure $P$ is known but the output $Q$ is unknown, and we have access to an i.i.d. sample $Y_1,\dots,Y_n$ from $Q$. Such a setting is natural when we think of the Brenier map as a multivariate quantile function, where $P$ serves as a reference measure (cf. \cite{chernozhukov2017monge}). Let $\hat{Q}_n = n^{-1}\sum_{i=1}^n\delta_{Y_i}$ denote the empirical distribution, which is supported in $\calY$. In addition, let $\hat{T}_n^0$ and $\hat{T}_n^\varepsilon$ with $\varepsilon > 0$ be the Brenier and entropic maps, respectively, for the pair $(P,\hat{Q}_n)$.
Our recent work \cite{sadhu2023limit} established a central limit theorem for $\hat{T}_n^0$ in $(\calC^\alpha)^*$, 
\begin{equation}
\sqrt{n}(\hat{T}_n^0 - T^0) \stackrel{d}{\to} \mathbb{G} \quad \text{in} \ (\calC^\alpha)^*, \quad \text{as}  \
 n \to \infty,
\label{eq: clt}
\end{equation}
where $\stackrel{d}{\to}$ signifies convergence in distribution and $\mathbb{G}$ is a centered Gaussian variable in $(\calC^\alpha)^*$ (the exact form of $\mathbb{G}$ can be found in Theorem 4 in \cite{sadhu2023limit}). The next result shows that the same weak limit holds for the entropic estimator with $\varepsilon = \varepsilon_n \downarrow 0$ sufficiently fast. 

\begin{corollary}[Central limit theorem under dual H\"{o}lder space]
\label{cor: clt}
Suppose Assumption~\ref{asp: compact_convex_regular_density} holds and in addition that one of the following holds for $\calX$: (a) $\calX$ is a polytope, or (b) $\calH^{d-1}(\partial \calX \cap H) = 0$ for every hyperplane $H$ in $\R^d$. Then, 
\[
\sqrt{n}(\hat{T}_n^{\varepsilon_n} - T^0) \stackrel{d}{\to} \mathbb{G}\quad \text{in} \ (\calC^\alpha)^*,
\]
provided that $\varepsilon_n = o\big ( n^{-\frac{1}{2(1+\alpha)}} \wedge n^{-1/4}/\log^{3/2}n \big)$, where $\mathbb{G}$ is the same centered Gaussian variable in  $(\calC^\alpha)^*$ as that in \eqref{eq: clt}. 
\end{corollary}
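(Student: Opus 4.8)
The plan is to decompose $\sqrt{n}(\hat T_n^{\varepsilon_n} - T^0) = \sqrt{n}(\hat T_n^0 - T^0) + \sqrt{n}(\hat T_n^{\varepsilon_n} - \hat T_n^0)$ and argue that the first term converges in distribution to $\mathbb{G}$ by the cited central limit theorem \eqref{eq: clt} from \cite{sadhu2023limit}, while the second term converges to zero in probability in $(\calC^\alpha)^*$. By Slutsky's theorem for separable Banach spaces, these two facts combine to give the claim. So the entire content is to show $\sqrt{n}\,\| \hat T_n^{\varepsilon_n} - \hat T_n^0 \|_{(\calC^\alpha)^*} \pconv 0$.

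For the bias term I would apply Corollary \ref{cor: dual norm} \emph{conditionally}, with the pair $(P, \hat Q_n)$ in place of $(P,Q)$: provided $\hat Q_n$ satisfies Assumption~\ref{asp: compact_convex_regular_density}(ii) with a uniform lower mass bound, one gets $\| \hat T_n^{\varepsilon_n} - \hat T_n^0 \|_{(\calC^\alpha)^*} \lesssim \varepsilon_n^{1+\alpha} \vee \varepsilon_n^2 \log^3(1/\varepsilon_n)$ with a constant that, crucially, depends on the output measure only through quantities one can control uniformly in $n$ with high probability. Multiplying by $\sqrt n$, the rate hypothesis $\varepsilon_n = o\big(n^{-1/(2(1+\alpha))} \wedge n^{-1/4}/\log^{3/2} n\big)$ forces $\sqrt n\,\varepsilon_n^{1+\alpha} = o(1)$ and $\sqrt n\, \varepsilon_n^2 \log^3(1/\varepsilon_n) = o(1)$, so the conditional bound tends to zero; a conditioning/dominated-convergence argument (or working on the high-probability event that the empirical masses and support are well-behaved) upgrades this to convergence in probability unconditionally.

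The main obstacle is that $\hat Q_n$ is random and a priori does \emph{not} satisfy Assumption~\ref{asp: compact_convex_regular_density}(ii) on the nose: its atoms $Y_1,\dots,Y_n$ are i.i.d.\ draws from $Q$, so (i) the number of distinct atoms is itself random, (ii) the empirical masses $\hat q_i = (\text{count})/n$ are not bounded below by a fixed $c_0$ — some atoms will have mass $1/n$ — and (iii) the support is a random subset of $\calY$. The lower-mass-bound issue is the real sticking point: the constant in Corollary \ref{cor: dual norm} depends on $c_0$, and for the empirical measure $c_0$ effectively degrades to $\min_i \hat q_i$, which can be as small as $1/n$. Resolving this requires inspecting the proof of Theorem \ref{thm: main result} to see exactly how the constant degrades in $c_0$ and in $N$, and checking that even with $c_0 \asymp 1/n$ and $N \le n$ (or $N = N_0$ fixed, the number of support points of $Q$) the resulting blow-up in the constant is only polynomial in $n$ and is absorbed by the extra room in the rate hypothesis. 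The additional geometric assumption on $\calX$ — either a polytope or $\calH^{d-1}(\partial\calX \cap H)=0$ for every hyperplane — is presumably needed precisely to keep the Laguerre-cell interface quantities ($\calH^{d-1}$ measures of $C_i(z^0)\cap C_j(z^0)$ and similar facial structures) stable, finite, and uniformly controlled as the output measure varies over a neighborhood of $Q$; this is the hypothesis that makes the "constant depends on the output only through controllable quantities" claim legitimate. I would handle this by (a) restricting to the event $E_n$ that the set of realized atoms equals $\supp(Q)$ and each $\hat q_i$ is within a constant factor of $q_i$, which has probability $\to 1$ by the law of large numbers since $Q$ has finitely many atoms each of positive mass, (b) applying the conditional version of Corollary \ref{cor: dual norm} on $E_n$ with $c_0$ replaced by $\tfrac12 \min_i q_i$ and $N$ equal to the fixed support size of $Q$, so the constant is genuinely nonrandom, and (c) bounding $\| \hat T_n^{\varepsilon_n} - \hat T_n^0 \|_{(\calC^\alpha)^*}$ trivially by a deterministic constant (both maps take values in the convex hull of $\supp Q$) on $E_n^c$, which contributes $o(1)$ after multiplication by $\sqrt n$ times $\Prob(E_n^c)$ — in fact $\Prob(E_n^c)$ decays exponentially — completing the proof.
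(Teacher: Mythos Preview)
Your overall strategy---the decomposition $\sqrt{n}(\hat T_n^{\varepsilon_n} - T^0) = \sqrt{n}(\hat T_n^0 - T^0) + \sqrt{n}(\hat T_n^{\varepsilon_n} - \hat T_n^0)$, Slutsky, and a conditional application of Corollary~\ref{cor: dual norm} on a high-probability event---is exactly what the paper does. Your handling of the empirical measure is also correct in substance, though over-elaborate: since $Q$ has the \emph{fixed} support $\{y_1,\dots,y_N\}$ with $\min_i q_i \ge c_0$, the empirical masses $\hat q_{n,i}$ concentrate around $q_i$, so $\min_i \hat q_{n,i} \ge c_0/2$ with probability tending to one, and on this event Corollary~\ref{cor: dual norm} applies with a fixed nonrandom constant. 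Your worry about atoms of mass $1/n$ conflates samples with atoms and is not a real obstacle.

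The genuine gap is in your treatment of the \emph{first} term and, relatedly, your explanation of the extra geometric hypothesis. You invoke \eqref{eq: clt} as if it were automatically available, but that CLT was proved in \cite{sadhu2023limit} under its own hypotheses (their Assumptions~1 and~2), and these must be verified in the present setting. The paper does exactly this: the additional condition (a) or (b) on $\calX$ is used to verify Assumption~1 of \cite{sadhu2023limit}, and the $L^1$-Poincar\'e inequality for $P$ (their Assumption~2) is checked separately from convexity of $\calX$ and the two-sided density bounds. Your guess that (a)/(b) is needed to stabilize the constant in Corollary~\ref{cor: dual norm} as the output measure varies is incorrect---Corollary~\ref{cor: dual norm} already holds under Assumption~\ref{asp: compact_convex_regular_density} alone, with no further geometric restriction on $\calX$. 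So the role you assigned to the hypothesis (a)/(b) is wrong, and the step where it is actually needed (verifying the hypotheses of the CLT for $\hat T_n^0$) is missing from your argument.
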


\begin{remark}[Comparison with \cite{pooladian2023minimax}]
\cite{pooladian2023minimax} showed that $\E[\| \hat{T}_n^{\varepsilon_n} - T^{\varepsilon_n}\|_{L^2(P)}^2] = O(\varepsilon_n^{-1}n^{-1})$. Combining the bias estimate in \eqref{eq: L2 bound}, they established $\E[\| \hat{T}_n^{\varepsilon_n} - T^{0}\|_{L^2(P)}^2] = O(n^{-1/2})$ by choosing $\varepsilon_n$ decaying at the rate $n^{-1/2}$. It is interesting to observe that, under the dual norm $\| \cdot \|_{(\calC^\alpha)^*}$, the empirical entropic map enjoys the parametric rate with $\varepsilon_n$ decaying substantially slower than $n^{-1/2}$.
\end{remark}

\iffalse
\begin{proposition}[Convergence rates for H\"{o}lder test functions]
Fix any $\alpha \in (0,1)$ and $0 < \alpha' < \alpha$.  
Under Assumption~\ref{asp: compact_convex_regular_density}, for every $\alpha$-H\"{o}lder vector field $\varphi: \calX \to \R^d$, 
    \[
    |\langle \varphi, T^\varepsilon - T^0 \rangle_{L^2(P)}| \lesssim \| \varphi \|_{\infty}\varepsilon^{1+\alpha'}   + \| \varphi \|_{\calC^\alpha}\varepsilon^{1+\alpha}, \quad \forall \varepsilon \in (0,1),  
    \]
    where the inequality $\lesssim$ holds up to a constant that depends only on $\calX, \rho, \calY$, and $c_0$.
\end{proposition}
\fi
\section{Proofs}
\label{sec: proof}
\subsection{Preliminaries}

Define
\[
\Delta^{\varepsilon}_{ij}(x)  \coloneqq \langle y_i - y_j, x \rangle - z_i^\varepsilon + z_j^\varepsilon, \quad \varepsilon \ge 0.
\]
Observe that $C_i(z^0) = \{ x \in \calX : \Delta_{ij}^0 \ge 0, \forall j \ne i \}$ and 
\[
T^\varepsilon (x)=\sum_{j=1}^N y_j \frac{e^{-\Delta_{ij}^\varepsilon(x)/\varepsilon}}{\sum_{k=1}^N e^{-\Delta_{ik}^\varepsilon(x)/\varepsilon}}
\]
for any $i \in [N]$ and $x \in \calX$.
Furthermore, define
\[
 H_{ij}(t) \coloneqq \{ x \in C_i(z^0): \Delta_{ij}^0(x) = t\}.
\]
Observe that $H_{ij} (0) = H_{ji}(0) = C_i(z^0) \cap C_j(z^0)$.
For notational convenience, set $M_\rho = \sup_{x \in \calX} \rho(x) < \infty$ and $\delta=\min_{i \ne j}\|y_i-y_j\| > 0$. In what follows, the notation $\lesssim$ means that the left-hand side is upper bounded by the right-hand side up to a constant that depends only on $\alpha, \calX,\rho, \calY$, and $c_0$. We first establish the following preliminary estimates.

\begin{lemma}
\label{lem:slack_integral}
    Under Assumption~\ref{asp: compact_convex_regular_density}, the following hold. 
    \begin{enumerate}
    \item[(i)] For any distinct indices $i,j$, one has 
    $\int_{C_i(z^0)}  e^{-\Delta_{ij}^0(x)/\varepsilon} \rho(x) \, dx \le \frac{\varepsilon M_\rho (\diam \calX)^{d-1}}{\|y_i-y_j\|}.
   $
   \item[(ii)] For any distinct indices $i,j,k$,
    \[
\int_{C_i(z^0)} e^{-\Delta^0_{ij}(x)/\varepsilon} e^{-\Delta^0_{ik}(x)/\varepsilon} \, \rho(x) \, dx \lesssim \varepsilon^2\log^2(1/\varepsilon), \quad \forall \varepsilon > 0.
\]
    \end{enumerate} 
\end{lemma}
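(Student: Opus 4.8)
The plan is to prove (i) by slicing $C_i(z^0)$ along the direction $y_i-y_j$, and (ii) by decomposing $C_i(z^0)$ according to the sizes of the two slacks $\Delta_{ij}^0$ and $\Delta_{ik}^0$.

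For (i), since $\Delta_{ij}^0$ is affine with constant gradient $y_i-y_j$, I would set $e=(y_i-y_j)/\|y_i-y_j\|$ and write $x=se+w$ with $s=\langle x,e\rangle$ and $w\in e^{\perp}$, so that $\Delta_{ij}^0(x)=\|y_i-y_j\|(s-s_0)$ for a constant $s_0$ and $C_i(z^0)\subseteq\{x\in\calX:s\ge s_0\}$. Bounding $\rho\le M_\rho$ and integrating first in $w$, the left-hand side is at most $M_\rho\int_{s_0}^{\infty}e^{-\|y_i-y_j\|(s-s_0)/\varepsilon}\,\calH^{d-1}\big(\calX\cap\{\langle\cdot,e\rangle=s\}\big)\,ds$; since every hyperplane slice of $\calX$ has diameter $\le\diam\calX$ and hence $\calH^{d-1}$-measure $\le(\diam\calX)^{d-1}$, evaluating the exponential integral yields the stated bound.

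For (ii), fix distinct $i,j,k$, put $r=r_\varepsilon:=\varepsilon\log(1/\varepsilon)$ (assume $\varepsilon$ small, which is the regime of interest), and write $C_i(z^0)=\cA\cup\cB\cup\cC$ with $\cA=\{x\in C_i(z^0):\Delta_{ij}^0(x)\ge r\}$, $\cB=\{x\in C_i(z^0):\Delta_{ik}^0(x)\ge r\}$ and $\cC=C_i(z^0)\setminus(\cA\cup\cB)$. On $\cA$ we have $e^{-\Delta_{ij}^0/\varepsilon}\le e^{-r/\varepsilon}=\varepsilon$, so part (i) applied to the pair $(i,k)$ gives $\int_{\cA}e^{-(\Delta_{ij}^0+\Delta_{ik}^0)/\varepsilon}\rho\,dx\le\varepsilon\int_{C_i(z^0)}e^{-\Delta_{ik}^0/\varepsilon}\rho\,dx\lesssim\varepsilon^2$, and symmetrically for $\cB$. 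On $\cC$ I would bound the exponential by $1$, so the contribution is at most $M_\rho\,\mathrm{Leb}(\cC)\le M_\rho\,\mathrm{Leb}\{x\in\calX:0\le\Delta_{ij}^0(x)<r,\ 0\le\Delta_{ik}^0(x)<r\}$. When $y_i-y_j$ and $y_i-y_k$ are linearly independent, the coarea formula applied to $F=(\Delta_{ij}^0,\Delta_{ik}^0):\R^d\to\R^2$ bounds this by $(\diam\calX)^{d-2}r^2/J$, where $J=\sqrt{\|y_i-y_j\|^2\|y_i-y_k\|^2-\langle y_i-y_j,y_i-y_k\rangle^2}>0$ and each fibre $F^{-1}(a,b)\cap\calX$ lies in a $(d-2)$-dimensional affine subspace of diameter $\le\diam\calX$; hence the $\cC$-contribution is $\lesssim r^2=\varepsilon^2\log^2(1/\varepsilon)$, and summing the three pieces proves the claim in this case.

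The remaining case $y_i-y_k=\lambda(y_i-y_j)$ with $\lambda\notin\{0,1\}$ — the degeneracy flagged right after Theorem~\ref{thm: main result} — is the crux. Here $\Delta_{ik}^0\equiv\lambda\Delta_{ij}^0+\mu$ for a constant $\mu$, and the key step is to show $\mu\ne0$. If $0<\lambda<1$, then $y_k=(1-\lambda)y_i+\lambda y_j$ is a strict convex combination, so the affine potentials $V_\ell:=\langle\cdot,y_\ell\rangle-z_\ell^0$ satisfy $V_k=(1-\lambda)V_i+\lambda V_j-\mu$; if $\mu=0$ this gives $V_k\le\max(V_i,V_j)$ everywhere, so $C_k(z^0)$ is contained in the hyperplane $\{V_i=V_j\}$ and $q_k=P(C_k(z^0))=0$, contradicting Assumption~\ref{asp: compact_convex_regular_density}(ii); the case $\lambda>1$ is the same after interchanging $j$ and $k$ (now $y_j$ is a strict convex combination of $y_i,y_k$, forcing $q_j=0$). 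Granting $\mu\ne0$, combining the identity with the defining constraints $\Delta_{ij}^0\ge0$, $\Delta_{ik}^0\ge0$ on $C_i(z^0)$ — and, when $\lambda<0$, noting these force $\mu>0$ (else $\Delta_{ij}^0\equiv0$ on $C_i(z^0)$, again collapsing a positive-mass cell into a hyperplane) together with $\Delta_{ij}^0\le\mu/|\lambda|$ — yields a constant $c_1=c_1(\lambda,\mu)>0$ with $\max\{\Delta_{ij}^0(x),\Delta_{ik}^0(x)\}\ge c_1$ for every $x\in C_i(z^0)$. Hence $\cC=\emptyset$ once $r_\varepsilon<c_1$, i.e.\ for all sufficiently small $\varepsilon$, and the claim follows from the bounds on $\cA$ and $\cB$. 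I expect this linearly dependent case — both the non-vanishing of $\mu$ and the extraction of the uniform lower bound $c_1$ — to be the main obstacle; the independent case and part (i) are routine once the Hausdorff-measure and coarea bookkeeping for affine slices of $\calX$ is set up.
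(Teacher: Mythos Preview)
Your proof is correct and follows essentially the same architecture as the paper: part (i) via slicing along $y_i-y_j$ (the paper phrases this as the coarea formula, which for an affine function is exactly your Fubini computation), and part (ii) via the same three-region decomposition at threshold $\varepsilon\log(1/\varepsilon)$, with the linearly independent case handled by a two-dimensional volume bound and the dependent case by showing the small-slack region is empty for small $\varepsilon$. The one noteworthy variation is in the dependent case $y_i-y_k=\lambda(y_i-y_j)$: the paper argues geometrically that a point in the small region would force the two parallel bounding hyperplanes to be within $O(\eta)$ of each other, squeezing $C_i(z^0)$ into a thin slab and contradicting $q_i\ge c_0$; you instead work with the potentials $V_\ell=\langle\cdot,y_\ell\rangle-z_\ell^0$, show that $\mu=0$ would make one of them a strict convex combination of two others and hence collapse a \emph{different} cell ($C_k$ or $C_j$) into a hyperplane, and then extract the uniform lower bound $c_1$ directly from the affine relation $\Delta_{ik}^0=\lambda\Delta_{ij}^0+\mu$. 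Your route treats all signs of $\lambda$ explicitly (the paper only writes out $c<0$) and is slightly slicker, while the paper's version makes the threshold explicit in the data $c_0,\delta,M_\rho,\diam\calX$; substantively the two arguments are the same insight in different clothing.
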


\begin{proof}[Proof of Lemma \ref{lem:slack_integral}]
(i). By the coarea formula \cite[Theorem 3.11]{evans1991measure}, 
\begin{equation}
    \int_{C_i(z^0)}e^{-\Delta_{ij}^0(x)/\varepsilon} \rho(x) \, dx = \frac{1}{\| y_i-y_j \|} \int_0^\infty \left ( \int_{H_{ij}(t)} \rho(x) \, d\calH^{d-1}(x) \right )e^{-t/\varepsilon} dt. 
    \label{eq: slack}
\end{equation}
The inner integral can be bounded by
$
M_\rho\calH^{d-1}\big( H_{ij}(t) \big ) \le M_\rho (\diam \calX)^{d-1}, 
$
as $H_{ij}(t)$ is a hyperplane section of $\calX$, which implies that the right-hand side on \eqref{eq: slack} can be bounded by $\varepsilon \|y_i-y_j\|^{-1} M_\rho (\diam \calX)^{d-1}$.

\medskip
(ii).  Fix $\eta > 0$. Set
\[
A_{i\ell}(\eta) \coloneqq \{x \in C_i(z^0): \Delta^0_{i\ell}(x) \geq \eta\} \quad \text{and} \quad
B_{i\ell}(\eta) \coloneqq \{x \in \calX: 0 \leq \Delta^0_{i\ell}(x) < \eta\},
\]
for $\ell = j,k$.  Then, applying the coarea formula, one has
\[
\begin{split}
    &\int_{C_i(z^0)} e^{-\Delta^0_{ij}(x)/\varepsilon} e^{-\Delta^0_{ik}(x)/\varepsilon} \rho(x)\,dx \\
    &\leq \left ( \int_{A_{ij}(\eta)} + \int_{A_{ik}(\eta)} + \int_{C_i(z^0) \cap A_{ij}(\eta)^c \cap A_{ik}(\eta)^c} \right ) e^{-\Delta^0_{ij}(x)/\varepsilon} e^{-\Delta^0_{ik}(x)/\varepsilon} \rho(x)\,dx\\
    &\leq \delta^{-1} e^{-\eta/\varepsilon}  \int_0^\infty \left \{\left( \int_{H_{ij}(t)} + \int_{H_{ik}(t)} \right )\rho(x) \, d\calH^{d-1}(x)  \right \} e^{-t/\varepsilon} \,dt 
 +  M_\rho \calH^d(B_{ij}(\eta) \cap B_{ik}(\eta))\\
    &\leq 2 \delta^{-1}\varepsilon e^{-\eta/\varepsilon} (\diam \calX)^{d-1} M_\rho + M_\rho \calH^d(B_{ij}(\eta) \cap B_{ik}(\eta)).
\end{split}
\]
For the second term on the right-hand side, we separately consider the following two cases. 

\medskip

Case (a). Suppose that  $y_i - y_j$ and $y_i - y_k$ are linearly independent.  In this case, 
\[
\calH^d(B_{ij}(\eta) \cap B_{ik}(\eta)) \leq (\diam \calX)^{d-2} \frac{\eta^2}{\sqrt{\|y_i - y_j\|^2 \|y_i - y_k\|^2 - \langle y_i - y_j, y_i - y_k \rangle^2}}.
\]

Case (b). Suppose that $y_i - y_j$ and $y_i - y_k$ are linearly dependent, so that $y_i - y_k = c(y_i - y_j)$ for some $c \ne 0$. We will show that there exists $\eta_0 > 0$ that depends only on $\calX, \rho, \calY$, and $c_0$ such that $B_{ij}(\eta) \cap B_{ik}(\eta) = \varnothing$ for all $\eta \in (0,\eta_0)$. We only consider the $c < 0$ case. The $c > 0$ case is similar (see Step 1 of the proof of Theorem 1~(i) in \cite{sadhu2023limit} for a similar argument). Suppose $B_{ij}(\eta) \cap B_{ik}(\eta) \ne \varnothing$, which entails that there exists some $x \in \calX$ such that 
\begin{equation}
0 \le \langle y_i-y_j,x \rangle - b_{ij} < \eta \quad \text{and} \quad 0 \le \langle y_i-y_k,x \rangle - b_{ik} < \eta,  
\label{eq: hyperplane}
\end{equation}
where $b_{ij} = z_i^0 - z_j^0$. Let $L_1$ and $L_2$ be the hyperplanes defined by $L_1 = \{ x : \langle y_i-y_j,x \rangle  = b_{ij} \}$ and $L_2 = \{  x: \langle y_i-y_k,x \rangle  = b_{ik} \}$, which are parallel as $y_i-y_j$ and $y_j-y_k$ are linearly dependent. As such,
\[
\dist (L_1,L_2) = \frac{|b_{ij} - c^{-1}b_{ik}|}{\|y_i-y_j\|}. 
\]
On the other hand, by our choice of $x$ from \eqref{eq: hyperplane}, 
\[
\dist (L_1,L_2) \le \dist (x,L_1) + \dist (x,L_2) \le \frac{\eta}{\|y_i-y_j\|} + \frac{\eta}{\|y_i-y_k\|} = \frac{\eta (1+|c|^{-1})}{\| y_i-y_j \|},
\]
so that $|b_{ij} - c^{-1}b_{ik}| \le \eta (1+|c|^{-1})$. Observe that
\[
\begin{split}
    C_i(z^0) &\subset \{x: \langle y_i - y_j, x \rangle \geq b_{ij} \} \cap \{ x: \langle y_i - y_k, x \rangle \geq b_{ik} \}\\
    &= \{x: \langle y_i - y_j, x \rangle \geq b_{ij} \} \cap \{ x: \langle y_i - y_j, x \rangle \leq  c^{-1}b_{ik} \}\\
    &\subset \{x: \langle y_i - y_j, x \rangle \geq b_{ij} \} \cap \{ x: \langle y_i - y_j, x \rangle \leq  b_{ij} +  \eta (1+|c|^{-1}) \},
\end{split}
\]
which implies that 
\[
\begin{split}
q_i = P(C_i(z^0)) \leq M_\rho (\diam \calX)^{d-1} \frac{\eta (1+|c|^{-1})}{\|y_i - y_j\|}.
\end{split}
\]
Hence, if we choose
\[
\eta_0 = \frac{\delta c_0}{2(1+|c|^{-1}) M_\rho (\diam \calX)^{d-1}},
\]
then $q_i < c_0 \le \min_{\ell}q_\ell$ for $\eta < \eta_0$, which is a contradiction. Conclude that $B_{ij} (\eta) \cap B_{ik}(\eta) = \varnothing$ for $\eta < \eta_0$.
\medskip

Finally, by choosing $\eta = \varepsilon \log(1/\varepsilon)$, we see that the desired estimate holds for all $\varepsilon \in (0,\varepsilon_0)$ for some $\varepsilon_0 > 0$ that depends only on $\calX, \rho, \calY$, and $c_0$. For $\varepsilon \ge \varepsilon_0$, one may use the crude upper bound 
\[
\int_{C_i(z^0)}  e^{-\Delta_{ij}^0(x)/\varepsilon} e^{-\Delta_{ik}^0(x)/\varepsilon}\rho(x) \, dx \le \int_{C_i(z^0)} \rho (x) \, dx \le 1,
\]
and adjust the constant hidden in $\lesssim$. 
\end{proof}

\subsection{Proof of Theorem \ref{thm: main result}} The proof is divided into two steps.

\medskip

\textbf{Step 1}. We first establish that 
\begin{equation}
|\langle \varphi, T^\varepsilon - T^0 \rangle_{L^2(P)} | \lesssim \| \varphi \|_{\infty} \big( \|z^\varepsilon-z^0\|_{\infty} e^{2\|z^\varepsilon-z^0\|_{\infty}/\varepsilon} + \varepsilon^2 \log^2(1/\varepsilon) \big) + \| \varphi \|_{\calC^\alpha}\varepsilon^{1+\alpha}. 
\label{eq: step1}
\end{equation}
Since $\{ C_i(z^0) \}_{i=1}^N$ forms a partition of $\calX$ up to Lebesgue negligible sets, one has
\[
\begin{split}
\langle \varphi, T^\varepsilon \rangle_{L^2(P)} %\int_{\calX} \langle \varphi(x),T^\varepsilon(x) \rangle \rho(x) \, dx 
%= \sum_{i=1}^N \int_{C_i(z^0)} \langle \varphi(x),T^\varepsilon(x) \rangle \rho(x) \, dx
=
\sum_{i=1}^N \sum_{j=1}^N \int_{C_i(z^0)}\langle y_j, \varphi(x) \rangle \frac{e^{-\Delta_{ij}^\varepsilon(x)/\varepsilon}}{\sum_{k=1}^N e^{-\Delta_{ik}^\varepsilon(x)/\varepsilon}} \rho(x) \, dx.
\end{split}
\]
On the other hand, 
\[
\langle \varphi, T^0 \rangle_{L^2(P)} = \sum_{i=1}^N \sum_{j=1}^N \int_{C_i(z^0)}\langle y_i, \varphi(x) \rangle \frac{e^{-\Delta_{ij}^\varepsilon(x)/\varepsilon}}{\sum_{k=1}^N e^{-\Delta_{ik}^\varepsilon(x)/\varepsilon}} \rho(x) \, dx.
\]
Subtracting these expressions leads to 
\begin{equation}
\begin{split}
\langle \varphi, T^\varepsilon - T^0 \rangle_{L^2(P)} 
%&= \sum_{i \ne j} \int_{C_i(z^0)}\langle y_j-y_i, \varphi(x) \rangle \frac{e^{-\Delta_{ij}^\varepsilon(x)/\varepsilon}}{\sum_{k=1}^N e^{-\Delta_{ik}^\varepsilon(x)/\varepsilon}} \rho(x) \, dx \\
=\sum_{i \ne j} \int_{C_i(z^0)}\langle y_j-y_i, \varphi(x) \rangle \frac{e^{-\Delta_{ij}^\varepsilon(x)/\varepsilon}}{1+\sum_{k \ne j} e^{-\Delta_{ik}^\varepsilon(x)/\varepsilon}} \rho(x) \, dx.
\end{split}
\label{eq: integrand}
\end{equation}
We will replace $\Delta_{ij}^\varepsilon$ with $\Delta_{ij}^0$ on the right-hand side. 

Noting that $e^{-\Delta_{ij}^\varepsilon/\varepsilon} = e^{(z_i^\varepsilon-z_i^0-z_j^\varepsilon+z_j^0)/\varepsilon}e^{-\Delta_{ij}^0/\varepsilon}$ and $\Delta_{ik}^0 \ge 0$ for $k \ne i$ on $C_i(z^0)$ and using the elementary inequality $|e^t-1| \le e^{|t|}|t|$, one has, for $x \in C_i(z^0)$,
\begin{align*}
&\left | \frac{e^{-\Delta_{ij}^\varepsilon(x)/\varepsilon}}{1+\sum_{k \ne j} e^{-\Delta_{ik}^\varepsilon(x)/\varepsilon}} - \frac{e^{-\Delta_{ij}^0(x)/\varepsilon}}{1+\sum_{k \ne j} e^{-\Delta_{ik}^0(x)/\varepsilon}} \right | \\
&\le\left| e^{-\Delta_{ij}^\varepsilon(x)/\varepsilon} \Big(1+ \sum_{k \ne i} e^{-\Delta_{ik}^0(x)/\varepsilon} \Big) - e^{-\Delta_{ij}^0(x)/\varepsilon} \Big(1+ \sum_{k \ne i} e^{-\Delta_{ik}^\varepsilon(x)/\varepsilon} \Big) \right |\\
&= \Bigg| e^{-\Delta_{ij}^0(x)/\varepsilon}\big(e^{(z_i^\varepsilon-z_i^0-z_j^\varepsilon+z_j^0)/\varepsilon}-1+1\big) \Big(1+ \sum_{k \ne i} e^{-\Delta_{ik}^0(x)/\varepsilon} \Big) \\
&\qquad \qquad - e^{-\Delta_{ij}^0(x)/\varepsilon} \Big(1+ \sum_{k \ne i} e^{-\Delta_{ik}^0(x)/\varepsilon} \big(e^{(z_i^\varepsilon-z_i^0-z_k^\varepsilon+z_k^0)/\varepsilon}-1+1\big)\Big) \Bigg | \\
&\le 4\varepsilon^{-1}N \|z^\varepsilon-z^0\|_{\infty}e^{-\Delta_{ij}^0(x)/\varepsilon} e^{2\|z^\varepsilon-z^0\|_{\infty}/\varepsilon}. 
\end{align*}
Lemma \ref{lem:slack_integral}~(i) then yields
\[
\begin{split}
&\left | \langle \varphi, T^\varepsilon - T^0 \rangle_{L^2(P)} - \sum_{i \ne j} \int_{C_i(z^0)}\langle y_j-y_i, \varphi(x) \rangle \frac{e^{-\Delta_{ij}^0(x)/\varepsilon}}{1+\sum_{k \ne j} e^{-\Delta_{ik}^0(x)/\varepsilon}} \rho(x) \, dx\right | \\
&\le 4N^3 \| \varphi \|_{\infty}   M_\rho (\diam \calX)^{d-1} \|z^\varepsilon-z^0\|_{\infty} e^{2\|z^\varepsilon-z^0\|_{\infty}/\varepsilon}. 
\end{split}
\]

Furthermore, 
\[
\left |\frac{e^{-\Delta_{ij}^0(x)/\varepsilon}}{1+\sum_{k \ne j} e^{-\Delta_{ik}^0(x)/\varepsilon}} - \frac{e^{-\Delta_{ij}^0(x)/\varepsilon}}{1+e^{-\Delta_{ij}^0(x)/\varepsilon}} \right | \le e^{-\Delta_{ij}^0(x)/\varepsilon}\sum_{k \ne i,j}e^{-\Delta_{ik}^0(x)/\varepsilon}.
\]
Hence, by Lemma \ref{lem:slack_integral}~(ii), we conclude that
\[
\begin{split}
&\left | \langle \varphi, T^\varepsilon - T^0 \rangle_{L^2(P)} - \sum_{i \ne j} \int_{C_i(z^0)}\langle y_j-y_i, \varphi(x) \rangle \frac{e^{-\Delta_{ij}^0(x)/\varepsilon}}{1+e^{-\Delta_{ij}^0(x)/\varepsilon}} \rho(x) \, dx\right | \\
&\lesssim \| \varphi \|_{\infty} \big ( \|z^\varepsilon-z^0\|_{\infty} e^{2\|z^\varepsilon-z^0\|_{\infty}/\varepsilon} + \varepsilon^2 \log^2(1/\varepsilon) \big). 
\end{split}
\]
Setting 
\[
h_{ij}^\varphi(t) = \int_{H_{ij}(t)} \langle y_j-y_i,\varphi(x) \rangle \rho(x) \, d\calH^{d-1}(x), 
\]
an application of the coarea formula yields
\[
\int_{C_i(z^0)} \langle y_j-y_i, \varphi(x) \rangle \frac{e^{-\Delta_{ij}^0(x)/\varepsilon}}{1+e^{-\Delta_{ij}^0(x)/\varepsilon}} \rho(x) \, dx = \frac{\varepsilon}{\| y_i-y_j \|} \int_{0}^\infty h_{ij}^\varphi(\varepsilon t) \frac{e^{-t}}{1+e^{-t}} \, dt.
\]
We will replace $h_{ij}^\varphi(\varepsilon t)$ with $h_{ij}^\varphi(0)$. To this end, we need the following estimate, whose proof will be given after the proof of this theorem.  
\begin{lemma}
\label{lem: hyperplane}
For any distinct indices $i,j$, $\calH^{d-1} \big (H_{ij}(t) \Delta [H_{ij}(0) + tv_{ij}] \big) \lesssim t$ for all $t > 0$ with $v_{ij} = (y_i-y_j)/\|y_i-y_j\|^2$. Here $[H_{ij}(0)+tv_{ij}] = \{ x + tv_{ij} : x \in H_{ij}(0) \}$.
\end{lemma}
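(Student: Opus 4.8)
\emph{Proof plan.} Throughout, fix distinct indices $i,j$, and for $t\in\R$ set $P_t:=\{x\in\R^d:\Delta_{ij}^0(x)=t\}$. The plan is to realize the symmetric difference as a slice of a convex body and then exploit convexity of $\calX$ and of the Laguerre cell $C_i(z^0)$. Since $\langle y_i-y_j,v_{ij}\rangle=1$, translation by $tv_{ij}$ carries $P_0$ onto $P_t$, and therefore $H_{ij}(t)=C_i(z^0)\cap P_t$ and $H_{ij}(0)+tv_{ij}=(C_i(z^0)\cap P_0)+tv_{ij}=(C_i(z^0)+tv_{ij})\cap P_t$, so that
\[
H_{ij}(t)\,\Delta\,[H_{ij}(0)+tv_{ij}]=\big(C_i(z^0)\,\Delta\,(C_i(z^0)+tv_{ij})\big)\cap P_t .
\]
It thus suffices to bound the $\calH^{d-1}$-mass of the $P_t$-slice of the symmetric difference between $C_i(z^0)$ and its translate by $tv_{ij}$; since any $\calH^{d-1}$-slice of this set has mass at most $(\diam\calX)^{d-1}$, we may assume $t<1$. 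A point of $C_i(z^0)\,\Delta\,(C_i(z^0)+tv_{ij})$ must violate exactly one of the constraints defining $C_i(z^0)=\calX\cap\bigcap_{k\ne i}\{\Delta_{ik}^0\ge 0\}$, so I bound the contribution of each constraint type and sum over the finitely many indices.

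Consider first a half-space constraint $k\ne i,j$. Translation by $tv_{ij}$ shifts the affine function $\Delta_{ik}^0$ by the constant $t\langle y_i-y_k,v_{ij}\rangle$, so the offending points lie in a slab $\{|\Delta_{ik}^0|\le Ct\}$ (with $C$ depending only on $\calY$) intersected with $\calX$. If $y_i-y_k$ and $y_i-y_j$ are linearly independent, this slab meets $P_t$ transversally, and slicing by $P_t$ and applying the coarea formula exactly as in the proof of \lemref{lem:slack_integral}(i) bounds the slice mass by $O\big(t(\diam\calX)^{d-2}\big)$. If $y_i-y_k$ is parallel to $y_i-y_j$, then $\Delta_{ik}^0$ is constant on each hyperplane $P_s$, and the argument in Case~(b) of the proof of \lemref{lem:slack_integral} (which uses only $\min_m q_m\ge c_0$) shows that this constant is bounded away from $0$ on the relevant set once $t$ is below a threshold depending only on $\calX,\rho,\calY,c_0$; hence there is no contribution for small $t$, while for larger $t$ one absorbs the crude bound $\calH^{d-1}(\cdot\cap\calX\cap P_t)\le(\diam\calX)^{d-1}$ into the implicit constant. (When $C_i(z^0)\cap C_j(z^0)=\varnothing$, the set $H_{ij}(0)$ is empty and $H_{ij}(t)$ is empty for $t$ small, so the claim is elementary in that degenerate case.)

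The remaining, and principal, contribution is that of the $\calX$-constraint: the offending points of the $P_t$-slice lie in $(\calX\cap P_t)\,\Delta\,\big((\calX\cap P_0)+tv_{ij}\big)$, a symmetric difference of two convex subsets of $P_t$. The key structural fact is that $P_0$ splits $\calX$ into two pieces each carrying $P$-mass at least $c_0$, since $P(\{\Delta_{ij}^0\ge 0\}\cap\calX)\ge P(C_i(z^0))=q_i\ge c_0$ and likewise for the complementary half with $q_j$. As $\rho$ is bounded above and below, this forces $\calX$ to extend a definite amount on both sides of $P_0$ and the slice $\calX\cap P_0$ to contain a $(d-1)$-ball of radius $r_0>0$, with $r_0$ and the corresponding extents depending only on $\calX,\rho,c_0$ (a standard Brunn--Minkowski-type volume estimate). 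Using convexity of $\calX$ together with the existence of points of $\calX$ on hyperplanes $P_{\pm\sigma_0}$ for some $\sigma_0>0$ depending only on $\calX,\rho,c_0$, one checks that for $t$ small the two slices $\calX\cap P_t$ and $(\calX\cap P_0)+tv_{ij}$, brought to a common hyperplane by translation, are within Hausdorff distance $O(t)$ of each other and both contain a ball of radius $r_0/2$. Since two convex bodies that are $O(t)$-Hausdorff-close and contain a common ball of radius $r_0/2$ have symmetric difference of $\calH^{d-1}$-mass $O(t)$ (dilate one slightly about the common center to engulf the other), the $\calX$-contribution is $O(t)$, and combining the three bounds completes the proof. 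The main obstacle is exactly this last step: one must preclude $\partial\calX$ from being nearly tangent to $P_0$ along the facet $H_{ij}(0)$, which would let the slices $H_{ij}(t)$ grow like $\sqrt t$ and defeat the bound; the two-sided positive-mass condition on $q_i,q_j$ is precisely what rules this out.
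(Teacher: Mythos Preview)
Your argument is correct and organizationally different from the paper's. Both approaches treat the half-space constraints $k\neq i,j$ identically: the offending points lie in a slab of width $O(t)$ which, in the non-parallel case, meets $P_t$ transversally (giving $\calH^{d-1}$-mass $O(t(\diam\calX)^{d-2})$), while the parallel case is dispatched exactly as in Case~(b) of \lemref{lem:slack_integral}. The difference is in the bookkeeping. The paper bounds the two set differences separately, arguing that a point $x_0+tv_{ij}$ (respectively $x-tv_{ij}$) that leaves $C_i(z^0)$ must land in some other Laguerre cell $C_k(z^0)$; you instead use the identity $H_{ij}(t)\,\Delta\,[H_{ij}(0)+tv_{ij}]=\big(C_i(z^0)\,\Delta\,(C_i(z^0)+tv_{ij})\big)\cap P_t$ together with the union bound $\bigcap_m A_m\,\Delta\,\bigcap_m B_m\subset\bigcup_m(A_m\,\Delta\,B_m)$ over the defining constraints $\calX,\{\Delta_{ik}^0\ge 0\}_{k\neq i}$.

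Your decomposition buys something real: it makes the $\calX$-constraint explicit, and your convex-geometry bound on $(\calX\cap P_t)\,\Delta\,\big((\calX\cap P_0)+tv_{ij}\big)$ is genuinely needed --- the paper's ``land in some $C_k(z^0)$'' step tacitly assumes the translated point stays in $\calX$, which fails near $\partial\calX$. Your Hausdorff-distance argument, driven by the fact that $\calX$ extends a definite amount on both sides of $P_0$ (since $q_i,q_j\ge c_0$ and $\rho$ is bounded), handles exactly this. Two small simplifications: you do not need the two slices to share a \emph{common} ball --- Hausdorff distance $O(t)$ between two convex subsets of $P_t$ of diameter $\le\diam\calX+O(t)$ already gives the $O(t)$ symmetric-difference bound via Steiner's formula (or by dilating either body about one of its interior points). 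And the parenthetical on the degenerate case $H_{ij}(0)=\varnothing$ is unnecessary: your union-bound framework covers it automatically, since then $(C_i(z^0)+tv_{ij})\cap P_t=\varnothing$ and $H_{ij}(t)$ itself sits inside the union of constraint-difference terms.
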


The above lemma yields 
\[
|h_{ij}^\varphi (t) - h_{ij}(0)| \lesssim \| \varphi \|_{\infty}t + \int_{H_{ij}(0)} \| \varphi (x+tv_{ij}) \rho(x+tv_{ij}) - \varphi (x) \rho(x) \| \, d\calH^{d-1}(x) \lesssim \| \varphi \|_{\calC^\alpha} (t \vee t^\alpha), 
\]
where we used the fact that $\rho$ is Lipschitz and $\calX$ is bounded. This implies 
\[
\left | \int_{0}^\infty (h_{ij}^\varphi(\varepsilon t)-h_{ij}^\varphi(0)) \frac{e^{-t}}{1+e^{-t}} \, dt \right |  \lesssim \| \varphi \|_{\calC^\alpha}  \varepsilon^\alpha, \quad \varepsilon \in (0,1),
\]
Furthermore, since $h^\varphi_{ij} (0) = - h_{ji}^\varphi(0)$ (as $H_{ij}(0) = H_{ji}(0) = C_i(z^0) \cap C_j(z^0)$), we have
$\sum_{i \ne j} h^\varphi_{ij}(0)/\|y_i-y_j\|=0$.
Putting everything together, we obtain the estimate in \eqref{eq: step1}. 

\medskip

\textbf{Step 2}. In this step, we establish that 
$\| z^\varepsilon - z^0 \| \lesssim \varepsilon^2 \log^3(1/\varepsilon)$,
which, combined with Step 1, leads to the result of the theorem. This is a slight improvement on Corollary 2.2 in \cite{delalande2022nearly}, but follows from the arguments there with a minor modification. We provide an outline below.

Set
\[
G_i(\varepsilon, z) = \int \frac{e^{(\langle x,y_i \rangle - z_i)/\varepsilon}}{\sum_{j=1}^N e^{(\langle x,y_j \rangle - z_j)/\varepsilon}} \rho(x) \, dx - q_i,\,i \in [N],
\]
and $G(\varepsilon, z) = (G_1(\varepsilon,z), \dots, G_N(\varepsilon, z))^\intercal$. By the first-order condition for the semidual problem \eqref{eq: semidual EOT}, $z^\varepsilon$ for $\varepsilon > 0$ satisfies 
$G(\varepsilon,z^\varepsilon) = 0$. 
By Theorem 3.2 in \cite{delalande2022nearly}, $\nabla_{z}G(\varepsilon,z^\varepsilon)$ is invertible on $(\mathbbm{1}_N)^\bot$ (the vector subspace of $\R^N$ orthogonal to $\mathbbm{1}_N$), so the implicit function theorem yields that the mapping $\varepsilon \mapsto z^\varepsilon$ is $\calC^1$ on $(0,\infty)$ with
$\dot{z}^\varepsilon = -\big[ \nabla_z G(\varepsilon,z^\varepsilon) \big ]^{-1} \dot{G}(\varepsilon,z^\varepsilon)$, 
where $\dot{z}^\varepsilon = dz^\varepsilon/d\varepsilon$ and $\dot G(\varepsilon,z) = \partial G(\varepsilon,z)/\partial \varepsilon$ (note here that $\dot G(\varepsilon,z) \in (\mathbbm{1}_N)^\bot$). Again, using Theorem 3.2 in \cite{delalande2022nearly}, one obtains 
$\| \dot{z}^\varepsilon \| \lesssim \| \dot{G}(\varepsilon,z^\varepsilon) \|/\lambda_2$, 
where $\lambda_2$ denotes the second smallest eigenvalue of the covariance matrix of $Q$. By \cite{tanabe1992exact}, $\lambda_2 \gtrsim 1$. Finally, the proof of Theorem 3.3 in \cite{delalande2022nearly} yields that for any $\eta > 0$, 
\[
|\dot{G}_i(\varepsilon,z^\varepsilon)| \lesssim \frac{\eta^3}{\varepsilon^2} + \frac{e^{-\eta/\varepsilon}}{\varepsilon^2} \big ( 1+\eta^2 + \varepsilon \eta + (\eta+\varepsilon^2)e^{-\eta/\varepsilon} \big), \ i \in [N].
\]
Choosing $\eta = 3\varepsilon \log (1/\varepsilon)$ leads to $\| \dot{z}^\varepsilon \| \lesssim \varepsilon \log^3(1/\varepsilon)$, so that $\| z^\varepsilon - z^0 \| \le \int_0^\varepsilon \| \dot{z}^t \| \, dt \lesssim \varepsilon^2 \log^3(1/\varepsilon)$. This completes the proof. \qed

\begin{proof}[Proof of Lemma \ref{lem: hyperplane}]
Set $b_{ij} = z_i^0-z_j^0$ for notational convenience. 
Since $x \in [H_{ij}(0) + t v_{ij}]$ for $t > 0$ satisfies $
\langle y_i-y_j,x \rangle - b_{ij}= t$, one sees that $[H_{ij}(0) + t v_{ij}] \setminus H_{ij}(t) \subset C_i(z^0)^c \cap C_j(z^0)^c$.
Set $H_{ijk}(t) = \{x: x \in H_{ij}(0), x + t v_{ij} \in C_k(z^0)\}$, then $
 [H_{ij}(0) + t v_{ij}] \setminus H_{ij}(t)   \subset \bigcup_{k \ne i,j} \big[H_{ijk}(t) + t v_{ij}\big]$.
For $x \in H_{ijk}(t)$, the translation of $x$ by $tv_{ij}$ alters the sign of   $\langle y_{i} - y_k, x \rangle - b_{ik}$, which can happen only when
$
0 \leq \langle y_i - y_k, x \rangle - b_{ik} \leq t\|y_i - y_k\|/\|y_i - y_j\|$. 
This implies $H_{ijk}(t) \subset 
\big\{ x \in \calX :\langle y_i - y_j, x \rangle = b_{ij},  b_{ik} \leq \langle y_i - y_k, x \rangle  \leq b_{ik} + R_{\calY} t \big\} =: A_{ijk}(t)  
$
with $R_\calY = \max_{\text{$i,j,k$ distinct}}\frac{\|y_i - y_k\|}{\|y_i - y_j\|}$. 
We separately consider the following two cases.

\medskip

Case (i). 
Suppose that $y_i - y_j$ and $y_i - y_k$ are linearly independent. In this case $\calH^{d-1}(A_{ijk}(t))\lesssim t$. 

\medskip

Case (ii). Suppose that $y_i - y_j$ and $y_i - y_k$ are linearly dependent, i.e., $y_i - y_k = c (y_i-y_j)$ for some $c\neq 0$. 
Set $L_1 = \{x : \langle y_i - y_j, x \rangle = b_{ij} \}$ and $L_2 = \{x: \langle y_i - y_k, x \rangle =b_{ik}\} = \{ x : \langle y_i-y_j,x \rangle = c^{-1}b_{ik}  \}$. Since $L_1$ and $L_2$ are parallel, we have $\dist(L_1, L_2) = \frac{|b_{ij} - c^{-1}b_{ik}|}{\|y_i - y_j\|}$.
In addition, if $x \in A_{ijk}(t)$, then
\[
\dist(x,L_1) = 0 \quad \text{and} \quad \dist(x, L_2) \leq \frac{R_{\calY} t}{\|y_i - y_k\|}.
\label{eq:dist_condition}
\]
Arguing as in the proof of Lemma \ref{lem:slack_integral}~(ii), one can show that there exists a sufficiently small $t_0$ that depends only on $\calX,\rho,\calY$, and $c_0$ such that $A_{ijk}(t) = \varnothing$ for all $t \in (0,t_0)$.

\medskip
Now, since the Hausdorff measure is translation invariant, we have
\begin{equation}
\calH^{d-1}\big([H_{ij}(0) + t v_{ij}] \setminus H_{ij}(t)\big) \le \sum_{k \ne i,j} \calH^{d-1}(A_{ijk}(t)) \lesssim t, \quad t \in (0,t_0). 
\label{eq: volume}
\end{equation}
For $t \ge t_0$, one may use the crude estimate 
$
\calH^{d-1}\big([H_{ij}(0) + t v_{ij}] \setminus H_{ij}(t)\big) \le \calH^{d-1}(H_{ij}(0)) \le (\diam \calX)^{d-1}
$
and adjust the constant in $\lesssim$ to see that the estimate \eqref{eq: volume} holds for all $t > 0$.

Next,  consider the set $H_{ij}(t) \setminus [H_{ij}(0) + t v_{ij}]$. Each $x \in H_{ij}(t) \setminus [H_{ij}(0) + t v_{ij}]$ satisfies $\langle y_i-y_j,x-tv_{ij} \rangle = b_{ij}$, so one must have $x - t v_{ij} \in C_i(z^0)^c \cap C_j(z^0)^c$. 
This implies that 
$
H_{ij}(t) \setminus [H_{ij}(0) + t v_{ij}]\subset \bigcup_{k \ne i,j} [\tilde H_{ijk}(t) + t v_{ij}],
$
where $\tilde H_{ijk}(t) = \big\{x \in C_k(z^0): x + t v_{ij} \in C_i(z^0), \langle y_i - y_j,x \rangle = b_{ij} \big\}$.
In this case, each $x \in \tilde H_{ijk}(t)$ satisfies 
$-R_{\calY}t \leq \langle y_i - y_k, x \rangle -b_{ik} \leq 0$, 
so that $\tilde H_{ijk}(t) \subset  \big \{x \in \calX: b_{ik} -R_{\calY}t \leq \langle y_i - y_k, x \rangle \le b_{ik}, \langle y_i - y_j, x \rangle = b_{ij} \} =: B_{ijk}(t)$.
Arguing as in the previous case, we have $\calH^{d-1}(B_{ijk}(t)) \lesssim t$. This completes the proof.
\end{proof}

\begin{remark}[Comparison with \cite{altschuler2022asymptotics,delalande2022nearly}]
\label{rem: comparison}
A key estimate in the proofs of Theorem~1.1 in \cite{altschuler2022asymptotics} and Theorem 2.3 in \cite{delalande2022nearly} that concern the asymptotic expansions of the entropic cost is on the integral $\int_{C_i(z^0)} \Delta_{ij}^0(x) \frac{e^{-\Delta_{ij}^0(x)/\varepsilon}}{\sum_{k=1}^N e^{-\Delta_{ik}^0(x)/\varepsilon}} \rho(x) \, dx$. Crucial to their derivations is to use the fact that $\Delta_{ij}(x) \ge 0$ on $C_i(z^0)$ to upper and lower bound the integral. Then, applying the coarea formula and change of variables $t/\varepsilon \to t$ leads to the $O(\varepsilon^2)$ rate. 
In our case, the integrand in \eqref{eq: integrand} need not be nonnegative nor a function of $\Delta_{ij}(x)$, so different arguments are needed. 
\end{remark}

\subsection{Proof of Corollary \ref{cor: clt}}
Let $\hat{q}_{n,i} = \hat{Q}_n(\{ y_i \})$, then $\min_{i} \hat{q}_{n,i} \ge c_0/2$ with probability approaching one. Hence, Corollary \ref{cor: dual norm} yields $\| \hat{T}^{\varepsilon_n}_n - \hat{T}_n^0\|_{(\calC^\alpha)^*} \lesssim \varepsilon_n^{1+\alpha} \vee \varepsilon_n^2 \log^3(1/\varepsilon_n)$. It remains to verify that the central limit theorem \eqref{eq: clt} for $\hat{T}_n^0$ holds under our assumption. To this end, it suffices to verify Assumptions 1 and 2 in \cite{sadhu2023limit}. Assumption 1 in \cite{sadhu2023limit} holds under the current Assumption \ref{asp: compact_convex_regular_density} and the additional assumption made in the statement of the corollary. To verify Assumption 2 in \cite{sadhu2023limit} ($L^1$-Poincar\'{e} inequality for $P$), we first note that it suffices to verify the $L^1$-Poincar\'{e} inequality with the expectation replaced by the median; cf. Lemma 2.1 in \cite{milman2007role}. Recall that the median minimizes the expected absolute deviation. Since $\calX$ is convex, the uniform distribution over $\calX$ satisfies (the median version of) the $L^1$-Poincar\'{e} inequality with constant $K$, say; cf. \cite{bobkov1999isoperimetric}. For any smooth function $f$ on $\R^d$,
\[
\min_{c} \int |f-c| \, dP \le M_\rho \min_c \int_{\calX}|f-c| \, dx  \le   \frac{KM_\rho}{\inf_{x \in \calX}\rho(x)} \int \| \nabla f \| \, dP.
\]
This implies that $P$ satisfies Assumption 2 in \cite{sadhu2023limit}.
\qed

\bibliographystyle{alpha}
\bibliography{ref}

\end{document}